 \newtheorem{theorem}{Theorem}[section]
\newtheorem{definition}[theorem]{Definition}
\newtheorem{proposition}[theorem]{Proposition}
\newtheorem{lemma}[theorem]{Lemma}
\newtheorem{corollary}[theorem]{Corollary}
\newtheorem*{Stepanov}{Stepanov's Lemma}
\newtheorem*{PD}{Poincar\'e Duality}
\newtheorem*{pairs}{Exact Sequence of a Pair}
\newtheorem*{MV}{Mayer-Vietoris Sequence}
\theoremstyle{definition}
\newtheorem{remark}[theorem]{Remark}
\newtheorem{example}[theorem]{Example}
\def\N{\ensuremath{\mathbb{N}}}
\def\Z{\ensuremath{\mathbb{Z}}}
\def\Q{\ensuremath{\mathbb{Q}}}
\def\P{\ensuremath{\mathbb{P}}}
\def\C{\ensuremath{\mathbb{C}}}
\def\cP{\ensuremath{\mathcal{P}}}
\def\R{\ensuremath{\mathbb{R}}}
\def\<{\ensuremath{\langle}}
\def\>{\ensuremath{\rangle}}
\DeclareMathOperator{\Bl}{Bl}
\DeclareMathOperator{\coker}{coker}
\DeclareMathOperator{\conv}{conv}
\DeclareMathOperator{\discr}{discr}
\DeclareMathOperator{\Gr}{Gr}
\def\sing{\mathrm{sing}}
\DeclareMathOperator{\im}{im}
\begin{document}

\title{Boundary complexes and weight filtrations}

\author{Sam Payne}
\email{sam.payne@yale.edu}

\begin{abstract}
We study the dual complexes of boundary divisors in log resolutions of compactifications of algebraic varieties and show that the homotopy types of these complexes are independent of all choices.  Inspired by recent developments in nonarchimedean geometry, we consider relations between these boundary complexes and weight filtrations on singular cohomology and cohomology with compact supports, and give applications to dual complexes of resolutions of isolated singularities that generalize results of Stepanov and Thuillier.
\end{abstract}

\maketitle

\vspace{-30 pt}

\tableofcontents

\vspace{-30 pt}

\section{Introduction}

Let $D$ be a divisor with simple normal crossings on an algebraic variety.  The \emph{dual complex} $\Delta(D)$ is a triangulated topological space, or $\Delta$-complex, whose $k$-dimensional simplices correspond to the irreducible components of intersections of $k+1$ distinct components of $D$, with inclusions of faces corresponding to inclusions of subvarieties; see Section~\ref{dual complexes} for further details.  This paper studies the geometry and topology of dual complexes for boundary divisors of suitable compactifications, and relations to Deligne's weight filtrations.

Let $X$ be an algebraic variety of dimension $n$ over the complex numbers.  By theorems of Nagata \cite{Nagata62} and Hironaka \cite{Hironaka64}, there is a compact variety $\overline X$ containing $X$ as a dense open subvariety, and a resolution
\[
\varphi: X' \rightarrow \overline X,
\]
which is a proper birational morphism from a smooth variety that is an isomorphism over the smooth locus in $X$, such that the boundary
\[
\partial X' = X' \smallsetminus \varphi^{-1}(X),
\]
and the union $\varphi^{-1}(X^\sing) \cup \partial X'$ are divisors with simple normal crossings.  We define the \emph{boundary complex} of a resolution of a compactification, as above, to be the dual complex $\Delta(\partial X')$ of the boundary divisor.

The intersections of irreducible components of boundary and exceptional divisors, and the inclusions among them, encode a simplicial resolution of the pair $(\overline X, \overline X \smallsetminus X)$ by smooth complete varieties, and this data determines the weight filtration, and even the full mixed Hodge structure, on the cohomology of $X$.  The combinatorial data in the boundary complex captures one piece of the weight filtration; there is a natural isomorphism from the reduced homology of the boundary complex to the $2n$th graded piece of the weight filtration on the cohomology of $X$,
\begin{equation} \label{boundary homology display}
\widetilde H_{i-1}(\Delta (\partial X'); \Q) \cong \Gr_{2n}^W H^{2n-i}(X).
\end{equation}
This isomorphism has been known to experts in mixed Hodge theory, and was highlighted in a recent paper by Hacking in the case where $X$ is smooth \cite[Theorem~3.1]{Hacking08}.  See Theorem~\ref{boundary homology} for the general case.   The existence of such an isomorphism suggests that the topology of the boundary complex may be of particular interest.

Any resolution of a compactification, as above, may be seen as a compactification of a resolution.  However, not every compactification of a resolution occurs in this way.  We consider, more generally, boundary complexes for compactifications of \emph{weak resolutions} that may or may not be isomorphisms over the smooth locus of $X$.  Let
\[
\pi: \widetilde X \rightarrow X
\]
be a proper birational morphism from a smooth variety, and let $X^+$ be a smooth compactification of $\widetilde X$ such that the boundary
\[
\partial X^+ = X^+ \smallsetminus \widetilde X
\]
is a divisor with simple normal crossings.

\begin{theorem} \label{only boundary}
The simple homotopy type of the boundary complex $\Delta(\partial X^+)$ is independent of the choices of resolution and compactification.
\end{theorem}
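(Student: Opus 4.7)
\emph{Proof plan.} The strategy combines three ingredients: Hironaka's theorem to dominate any two choices by a common one, the weak factorization theorem of Abramovich--Karu--Matsuki--W\l odarczyk to reduce to a single admissible blowup, and a case-by-case combinatorial analysis showing that such a blowup induces a simple homotopy equivalence of dual complexes.

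The first reduction is to a single proper birational morphism $\sigma : X^{++} \to X^+$ of smooth compactifications with SNC boundaries such that $\sigma^{-1}(\partial X^+) = \partial X^{++}$. Given two choices $(\pi_i : \widetilde X_i \to X, X_i^+)$, I would first dominate $\widetilde X_1$ and $\widetilde X_2$ by a common smooth weak resolution $\widetilde X$ via Hironaka (by resolving the closure of the graph of $\widetilde X_1 \dashrightarrow \widetilde X_2$), and then construct a smooth compactification $X^+$ of $\widetilde X$ with SNC boundary that dominates both $X_i^+$ with preimage of boundary equal to boundary; the latter is obtained by resolving the rational map $X^+ \dashrightarrow X_i^+$ and, if necessary, enlarging $\partial X^+$ to absorb any new exceptional components so that the open complement remains a weak resolution of $X$. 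This reduces the theorem to comparing $\Delta(\partial X^+)$ with $\Delta(\partial X_i^+)$ under a single such morphism $\sigma$.

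The next step is to apply weak factorization to decompose $\sigma$ as a finite sequence of blowups and blowdowns along smooth centers lying in SNC position with the boundary (in its logarithmic form when $\sigma$ restricts to an isomorphism on the interiors, and in its ordinary form otherwise). The problem thus reduces to analyzing a single admissible blowup $\sigma : X^{++} \to X^+$ along a smooth center $Z$, and showing it induces a simple homotopy equivalence $\Delta(\partial X^{++}) \simeq \Delta(\partial X^+)$. If $Z$ is a closed stratum of $\partial X^+$, i.e., an irreducible component of an intersection of boundary divisors, the change is a stellar subdivision of the corresponding face and is a homeomorphism. If $Z$ is a proper smooth subvariety of a stratum, the exceptional divisor is a projective bundle over $Z$ and the dual complex acquires a new vertex joined to the face indexed by the boundary components meeting $Z$; I expect this to be an elementary expansion, hence a simple homotopy equivalence. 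If $Z$ meets boundary components transversally without lying in the boundary, a similar local analysis applies. The main obstacle is organizing this combinatorial case analysis and verifying face by face that the resulting modifications are genuine elementary expansions or collapses rather than merely homotopy equivalences; everything else in the argument is a standard application of Hironaka and weak factorization.
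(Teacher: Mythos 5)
Your overall architecture (reduce to comparing two compactified resolutions, factor the comparison into blowups, analyze each blowup combinatorially) matches the paper's, and the case analysis you defer to the end is exactly Stepanov's Lemma, which is a known result you can quote rather than reverify face by face. The genuine gap is in the weak factorization step. Because the theorem allows two weak resolutions $\widetilde X_1, \widetilde X_2$ that genuinely differ over $X$, your morphism $\sigma: X^{++}\to X^+$ (or the birational map $X_1^+\dashrightarrow X_2^+$) is \emph{not} an isomorphism on the complement of the boundary divisors, so the toroidal/logarithmic form of weak factorization cannot be invoked relative to $\partial X^+$ alone. Your fallback to the ``ordinary form otherwise'' does not work: the ordinary weak factorization theorem gives no control over the position of the blowup centers relative to $\partial X^+$, so the intermediate varieties need not have simple normal crossing boundaries and Stepanov's Lemma does not apply to them. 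The largest open set on which the map is an isomorphism is $X\smallsetminus V$ for some closed $V\subset X$ containing both discriminants, and to apply toroidal weak factorization relative to it one needs $\pi_i^{-1}(V)\cup\partial X_i^+$ to have simple normal crossings --- a condition not granted by the hypotheses. The missing step is therefore a preliminary reduction (the paper's Lemma~\ref{enlarge discriminant}): use Hironaka's principalization to modify each resolution so that the preimage of $V$ together with the boundary is a simple normal crossing divisor, and check, again via Stepanov's Lemma, that this modification itself only changes $\Delta(\partial X_i^+)$ by a simple homotopy equivalence. After that, the factorization produces centers having simple normal crossings with the \emph{full} divisor $\pi^{-1}(V)\cup\partial X^+$, and one must track the subcomplex $\Delta(\partial Z_j)$ inside $\Delta\bigl(\pi^{-1}(V)\cup\partial Z_j\bigr)$ through each step: when the center is not contained in the boundary the boundary complex is unchanged, and otherwise cases (2) and (3) of Stepanov's Lemma give the required elementary expansions and collapses.

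One smaller remark on your first reduction: ``enlarging $\partial X^+$ to absorb new exceptional components'' is not quite right, since adding to the boundary components lying over the interior would destroy properness of the complement over $X$. What saves you is that the preimages $\sigma_i^{-1}(\widetilde X_i)$ in a common smooth dominating model automatically coincide (both are open, proper over $X$, and agree on a dense open set), so the preimage of either boundary is the boundary of a legitimate compactified weak resolution; but this domination step is orthogonal to the real difficulty above. The paper avoids domination entirely and instead equalizes the discriminants of the two given resolutions before factoring the direct birational map $X_1^+\dashrightarrow X_2^+$.
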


\noindent  In the special case where $X$ is smooth, the invariance of the ordinary homotopy type of $\Delta(\partial X^+)$ is due to Thuillier, who gave a proof over perfect fields, when such compactifications exist, using nonarchimedean analytic geometry \cite{Thuillier07}.  The simple homotopy type is a finer invariant, in general, than ordinary homotopy type for regular CW complexes.  For instance, the 3-dimensional lens spaces $L_{7,1}$ and $L_{7,2}$ are homotopy equivalent but not simple homotopy equivalent \cite[Chapter~V]{Cohen73}.  The distinction for boundary complexes is not vacuous, because the simple homotopy type of any regular CW complex is realized by a boundary complex.  See Example~\ref{ex:arbitrarycx}.  

It follows from these homotopy invariance results that invariants of the boundary complex such as homotopy groups and generalized cohomology rings are also invariants of $X$.  Some of these have been known and been studied in other contexts.  For instance, the integral homology groups of the boundary complex can be computed from the motivic weight complexes of Gillet and Soul\'e \cite{GilletSoule96} and Guill\'en and Navarro Aznar \cite{GuillenNavarroAznar02}.  Others, such as the fundamental group of the boundary complex and simple homotopy type, appear to be new and interesting.  

\begin{remark}
As noted above, the reduced homology of the boundary complex is identified with the $2n$th graded piece of the weight filtration on the cohomology of $X$.  At the other end of the weight filtration, Berkovich showed that the rational cohomology of the nonarchimedean analytification $X^{\mathrm{an}}$ of $X$ with respect to the trivial valuation on the complex numbers is naturally identified with $W_0 H^*(X,\Q)$ \cite{Berkovich00}, and Hrushovski and Loeser have announced a proof that this nonarchimedean analytification has the homotopy type of a finite simplicial complex \cite{HrushovskiLoeser10}.  Therefore, there are canonical homotopy types of finite simplicial complexes associated to the first and last graded pieces of the weight filtration.
\end{remark}

\begin{remark}
The boundary complex can be embedded in $X^{\mathrm{an}}$ or, more precisely, in the ``punctured tubular neighborhood at infinity" that is written $X^\mathrm{an} \smallsetminus X^\beth$ in the notation of \cite{Thuillier07}, by mapping each vertex to a scalar multiple of the valuation on the function field $\C(X)$ given by order of vanishing along the corresponding divisor, and mapping each $k$-dimensional face to the space of suitably normalized monomial valuations in the $k+1$ local coordinates cutting out the irreducible components of the boundary that meet along the corresponding subvariety.  The closure in $X^\mathrm{an}$ of the union of the images of all such embeddings, over all suitable resolutions of compactifications of $X$, may then be seen as an intrinsic boundary complex for $X$.  Favre and Jonsson have studied two examples of these intrinsic boundary complexes in detail, with important applications to complex dynamics.  For the complement of a point in the projective plane, the intrinsic boundary complex is their valuative tree \cite{FavreJonsson04}, and for the complement of a line it is their tree of valuations at infinity \cite{FavreJonsson07, FavreJonsson11}.  The analogue of the valuative tree in higher dimensions is studied in \cite{BFJ08}.
\end{remark}

These boundary complexes can also be applied to study invariants of singularities and, more generally, invariants of pairs.  Suppose $Y$ is a closed subset of $X$, and consider a proper birational morphism from a smooth variety such that the preimage $E$ of $Y$ is a divisor with simple normal crossings.  If $X$ is proper, then the \emph{resolution complex} $\Delta(E)$ is a boundary complex for $X \smallsetminus Y$, and therefore Theorem~\ref{only boundary} says that its simple homotopy type is independent of the choice of resolution.  If one studies such resolution complexes in the case where $X$ is not compact, filtered complexes naturally appear as boundary complexes of descending chains of open subsets of varieties.  For instance, one could study a boundary complex for $X$ as a subcomplex of a boundary complex for $X \smallsetminus Y$.
In this context, it is natural to consider chains of closed subsets as well.

Let $Y_1 \subset \cdots \subset Y_s$ be a chain of closed algebraic subsets of $X$, and let $\pi$ be a weak log resolution of $X$ with respect to $Y_1, \ldots, Y_s$.  By this, we mean that $\pi$ is a proper birational morphism from a smooth variety $\widetilde X$ to $X$ such that the preimage $E_i$ of each $Y_i$ is a divisor with simple normal crossings.   Let $X^+$ be a smooth compactification of $\widetilde X$ such that the boundary
\[
\partial X^+ = X^+ \smallsetminus \widetilde X
\]
and the union $\partial X^+ \cup E_s$ are divisors with simple normal crossings.   Then $\partial X^+ \cup E_i$ is a divisor with simple normal crossings for each $i$, because it is a union of components of $\partial X^+ \cup E_s$.

\begin{theorem} \label{pairs}
Fix an integer $r$, with $0 \leq r \leq s$.  Then the simple homotopy type of the filtered complex
\[
\Delta(E_1) \subset \cdots \subset \Delta(E_r) \subset \Delta(E_{r+1} \cup \partial X^+) \subset \cdots \subset \Delta(E_s \cup \partial X^+)
\]
is independent of the choices of compactification and resolution.
\end{theorem}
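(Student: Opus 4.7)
The plan is to adapt the proof of Theorem~\ref{only boundary} to the filtered setting. As in the unfiltered case, I would first reduce to comparing two choices that are related by a morphism. Given two weak log resolutions $\pi_1 \colon \widetilde X_1 \to X$ and $\pi_2 \colon \widetilde X_2 \to X$ with compactifications $X_1^+$ and $X_2^+$, I would resolve the closure of the graph of the birational map $X_1^+ \dashrightarrow X_2^+$, further blowing up to arrange that the pullbacks of all the $Y_i$ and of both boundaries become simple normal crossings divisors jointly. This produces a third choice $(\widetilde X_3, X_3^+)$ dominating both $(\widetilde X_1, X_1^+)$ and $(\widetilde X_2, X_2^+)$, so it suffices to establish invariance under an admissible domination morphism $X_3^+ \to X_1^+$.

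Next I would apply the weak factorization theorem of Abramovich, Karu, Matsuki, and W\l{}odarczyk to express this domination morphism as a sequence of blow-ups and blow-downs along smooth centers having simple normal crossings with the full boundary-plus-exceptional divisor $\partial X^+ \cup E_s$. The crucial observation is that each $E_i$ and each $E_i \cup \partial X^+$ is a union of components of $\partial X^+ \cup E_s$, so the SNC hypothesis automatically descends to every level of the filtration. In this way the problem reduces to showing that a single admissible blow-up induces a simple homotopy equivalence of the entire filtered complex.

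For a smooth center $Z$ meeting an SNC divisor $D$ transversally, let $F$ denote the new exceptional divisor. Locally $Z$ is cut out by some subset of the coordinates defining the components of $D$ through $Z$, together with possibly additional smooth equations transverse to $D$; after blow-up, $F$ acquires a vertex in $\Delta(D \cup F)$ precisely when $Z$ is contained in some component of $D$, and the change from $\Delta(D)$ to $\Delta(D \cup F)$ is either trivial (when $Z \not\subset D$, as $F$ then contributes only a free attachment that collapses away) or a stellar subdivision of the simplex corresponding to the stratum of $D$ in which $Z$ generically lies. Both moves are elementary simple homotopy equivalences. For the filtered complex, the key compatibility is that $F$ becomes a vertex of $\Delta(E_i)$ (respectively of $\Delta(E_i \cup \partial X^+)$) exactly when $Z \subset E_i$ (resp.\ $Z \subset E_i \cup \partial X^+$), and when it does, the subdivided simplex already lies in the corresponding subcomplex.

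I expect the main obstacle to be the bookkeeping required to lift these level-by-level elementary moves to a simple homotopy equivalence of \emph{filtered} complexes, as opposed to a collection of unrelated equivalences at each level. Concretely, one must verify that the elementary collapses and expansions witnessing each $\Delta(E_i) \simeq \Delta(E_i^{\mathrm{new}})$ extend across the inclusions in the filtration, which amounts to checking that the stellar subdivision takes place in the star of a single simplex that is well-behaved with respect to the inclusions of subcomplexes cut out by unions of components. This is combinatorially delicate but should go through by organizing the filtration according to the smallest index $i$ for which $Z$ enters the subcomplex, and performing the subdivisions starting from that level and propagating upward through the filtration.
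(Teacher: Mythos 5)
Your overall strategy is the paper's: toroidal weak factorization plus Stepanov's Lemma, with the filtered compatibility organized around the smallest index at which the blowup center enters the filtration. But there are two genuine gaps. The first is in the reduction. Toroidal weak factorization --- the version that guarantees the centers have simple normal crossings with the transforms of the relevant divisors, which is exactly what Stepanov's Lemma needs --- requires the birational map to be an isomorphism on a common open set whose complement is an SNC divisor on \emph{both} sides. Your domination morphism $X_3^+ \to X_1^+$ will in general have discriminant not contained in $\partial X_1^+ \cup E_s$: two weak log resolutions of $X$ may differ over arbitrary closed subsets of $X$ disjoint from all the $Y_i$, and that discriminant need not even be a divisor in $X_1^+$, let alone have simple normal crossings with $\partial X_1^+ \cup E_s$. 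You cannot fix this by blowing up $X_1^+$ further, since $X_1^+$ is one of the two objects being compared. The paper breaks this circularity with a separate lemma (Lemma~\ref{enlarge discriminant}): one may enlarge the discriminant of a weak log resolution to any prescribed closed set $V \supset \discr(\pi)$ without changing the simple homotopy type of the filtered complex, and this lemma itself already requires the full Stepanov-plus-discrete-Morse-flow argument. Only after both resolutions have been arranged to be isomorphisms over a common $X \smallsetminus V$, with $\pi_i^{-1}(V) \cup \partial X_i^+$ SNC, does weak factorization apply with admissible centers and with intermediate varieties on which the preimages of the $Y_i$ are still well defined.

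The second gap is in the filtered compatibility itself, where your key claim --- that when the new exceptional vertex enters $\Delta(E_i)$ ``the subdivided simplex already lies in the corresponding subcomplex'' --- is false in general. If the center $Z$ lies in some but not all of the components of $E_s \cup \partial X^+$ that belong to $E_i$, then even when the top-level complex undergoes an honest barycentric subdivision, the subcomplex $\Delta(E_i)$ changes by gluing on a cone over the face corresponding to the smallest stratum of $E_i$ containing $Z$, and then attaching further cones over faces containing it (case (3) of Stepanov's Lemma, which your case analysis also omits at the top level). So $\Delta(E_i')$ need not be homeomorphic, or even piecewise-linearly related by subdivision, to $\Delta(E_i)$. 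The repair is not ``performing the subdivisions level by level'' but a single collapse chosen once and for all: take $i$ minimal with $Z \subset E_i$ (or $E_i \cup \partial X^+$), choose a component $D_j$ of that divisor containing $Z$, and use the discrete Morse flow pairing each face containing the exceptional vertex $v_F$ but not $v_j$ with the face it spans together with $v_j$. This one flow restricts to a simple homotopy equivalence on every level of the filtration simultaneously, which is the compatibility your sketch asserts but does not establish.
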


\noindent  Each of the complexes appearing in the theorem is a boundary complex.  For instance, $\Delta(E_i)$ is a boundary complex for $X^+ \smallsetminus \overline E_i$.  The proof of the theorem uses toroidal weak factorization of birational maps \cite{AKMW02, Wlodarczyk03} and extends arguments of Stepanov from \cite{Stepanov06}.  Similar methods were used earlier by Shokurov in his study of dual complexes of log canonical centers \cite{Shokurov00}.  

Theorem~\ref{only boundary} is the special case of Theorem~\ref{pairs} where $r = 0$, $s = 1$, and $Y_1$ is empty.  In the special case where $r = s = 1$, we recover the following simple homotopy invariance result for resolution complexes.

\begin{corollary}
Let $\pi$ be a weak log resolution of $X$ with respect to a closed subset $Y$.  Then the simple homotopy type of $\Delta( \pi^{-1}(Y))$ is independent of all choices.
\end{corollary}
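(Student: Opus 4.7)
My plan is to derive the corollary as a direct specialization of Theorem~\ref{pairs}, with the minor preliminary step of producing an auxiliary compactification that is invisible from the statement of the corollary but required by the hypotheses of the theorem.

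First I would observe that, given the weak log resolution $\pi: \widetilde X \to X$ with respect to $Y$, a compactification of the form required in Theorem~\ref{pairs} always exists. Indeed, by Nagata's theorem $\widetilde X$ admits some compactification, and then Hironaka's theorem allows one to blow up in the boundary until both $\partial X^+$ and $\partial X^+ \cup E_1$ are divisors with simple normal crossings, where $E_1 = \pi^{-1}(Y)$. This puts the data $(\pi, X^+)$ in the setting of Theorem~\ref{pairs} with $s = 1$ and $Y_1 = Y$.

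Next I would specialize the filtered complex of Theorem~\ref{pairs} to $r = s = 1$. In that range the chain reads simply
\[
\Delta(E_1),
\]
since there are no indices $i$ with $r + 1 \leq i \leq s$ contributing a term involving $\partial X^+$, and $E_1 = \pi^{-1}(Y)$ is the only resolution divisor. Theorem~\ref{pairs} therefore asserts that the simple homotopy type of $\Delta(\pi^{-1}(Y))$, viewed as a (trivially) filtered complex, is independent of the choice of compactification $X^+$ and of the choice of resolution $\pi$. In particular it is independent of the resolution alone, which is the content of the corollary.

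There is essentially no obstacle beyond checking that this degenerate case of Theorem~\ref{pairs} is genuinely covered by the statement; the only thing one needs to be comfortable with is that the auxiliary compactification does not enter the conclusion, precisely because Theorem~\ref{pairs} already absorbs dependence on it into the invariance assertion. Thus no additional factorization or Stepanov-type argument is required at this stage: once Theorem~\ref{pairs} is in hand, the corollary is a one-line specialization.
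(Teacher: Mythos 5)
Your proposal is correct and is exactly the route the paper takes: the corollary is stated as the $r=s=1$ specialization of Theorem~\ref{pairs}, under which the filtered chain degenerates to the single complex $\Delta(\pi^{-1}(Y))$. Your added remark that a suitable compactification exists by Nagata and Hironaka is a sensible (if implicit in the paper) preliminary, and nothing further is needed.
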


\noindent  This corollary is due to Thuillier, for ordinary homotopy type under the additional assumption that $\pi$ is an isomorphism over $X \smallsetminus Y$ \cite{Thuillier07}, who generalized an earlier result of Stepanov for isolated singularities \cite{Stepanov06}.  One substantive improvement here is that $X$ need not be smooth away from $Y$ and resolutions with arbitrary discriminant are allowed.  This additional flexibility in choosing the resolution is helpful for examples and applications, such as the computation  of resolution complexes for singularities that are generic with respect to Newton polyhedra.  See Theorem~\ref{thm:hypersurface}.

In Section~\ref{affine section}, we study boundary complexes of affine varieties; some of the examples here were the original motivation for this project, through the relation between boundary complexes and tropicalizations for subvarieties of tori \cite{Tevelev07, Hacking08}.  In the case where $X$ is affine then, by theorems of Andreotti and Frankel in the smooth case \cite{AndreottiFrankel59} and Kar\v{c}jauskas in general \cite{Karcjauskas77, Karcjauskas79}, $X$ has the homotopy type of a regular CW complex of dimension $n$.  Since $\Gr_{2n}^W H^k(X)$ vanishes for $k$ less than $n$, the isomorphism (\ref{boundary homology display}) implies that the boundary complex of an affine variety has the rational homology of a wedge sum of spheres of dimension $n-1$.  For some special classes of affine varieties including surfaces, complements of hyperplane arrangements, and general complete intersections of ample hypersurfaces in the dense torus of a projective toric variety, and products of the above, we show that the boundary complex even has the homotopy type of a wedge sum of spheres; see Section~\ref{affine section}.  However, other homotopy types are also possible.  See Example~\ref{RP boundary} for an affine variety whose boundary complex has the homotopy type of a real projective space.

In Section~\ref{resolution section} we apply similar methods to study resolution complexes.  Consider now, for simplicity, the special case where $X$ is smooth away from an isolated singular point $x$.  Let $\pi: \widetilde X \rightarrow X$ be a weak log resolution with respect to $x$, with $E = \pi^{-1}(x)$.  In Section~\ref{resolution section}, we give a natural isomorphism identifying the reduced cohomology of the resolution complex $\Delta(E)$ with the weight zero part of the cohomology of $X$
\[
\widetilde H^{k-1}(\Delta(E); \Q) \cong W_0 \widetilde H^{k} (X),
\]
for $k \geq 1$.  If the singularity at $x$ is Cohen-Macaulay then each connected component of $\Delta(E)$ has the rational homology of a wedge sum of spheres of dimension $n - 1$, and if the singularity is rational then the resolution complex is connected and its top homology also vanishes, so the resolution complex has the rational homology of a point.  These facts follow directly from well-known results related to the theory of Du Bois singularities \cite[Proposition~3.2]{Ishii85} and \cite[Lemma~3.3]{Kovacs99}, and are highlighted in recent work of Arapura, Bakhtary, and W{\l}odarczyk, who also proved a generalization for non isolated rational singularities \cite{ABW09}.  

For special classes of singularities, including rational surface singularities, toric singularities \cite[Theorem~2]{Stepanov06}, and isolated rational hypersurface singularities of dimension three \cite[Corollary~3.3]{Stepanov08}, the resolution complex has the homotopy type of a wedge sum of spheres.  In Theorem~\ref{thm:hypersurface} we show that this is also the case for normal hypersurface singularities that are general with respect to their Newton polyhedra.  By work of Takayama on fundamental groups of exceptional divisors, resolution complexes of log terminal singularities are simply connected \cite{Takayama03}, but it is not known whether they are contractible.  Here we give an example of an isolated rational singularity whose resolution complex has the homotopy type of $\R\P^2$, obtained by partially smoothing a cone over a degenerate Enriques surface.  See Example~\ref{Enriques}.  This gives a negative answer to Stepanov's question on whether resolution complexes of rational singularities are contractible.  

It should be interesting to further investigate which simple homotopy types can appear as resolution complexes of rational and isolated Cohen-Macaulay singularities, and as boundary complexes of affine varieties, and what additional conditions are needed to guarantee that these complexes are contractible, simply connected, or have the homotopy type of a wedge sum of spheres.

\bigskip

\begin{remark}
Much of this paper was inspired by recent developments in tropical and nonarchimedean analytic geometry, especially the work of Hacking \cite{Hacking08}, Helm and Katz \cite{HelmKatz08}, and Thuillier \cite{Thuillier07}.  The connections between tropical geometry and Hodge theory have now been further developed by Katz and Stapledon in \cite{KatzStapledon10}.  The nonarchimedean methods are natural and powerful, but beyond the scope of this paper.  Our main results, and their proofs, can be presented without any tropical or nonarchimedean analytic language, and that is the style adopted here.

Since this paper appeared, Arapura, Bakhtary, and W{\l}odarczyk have used Stepanov's Lemma and weak factorization to prove further homotopy invariance results, with relations to weight filtrations and resolutions of singularities, in characteristic zero \cite{ABW11}.  We understand that these homotopy invariance results can also be proved over arbitrary perfect fields, through a natural extension of Thuillier's method.  Also, most recently, Kapovich and Koll\'{a}r have now classified the groups that occur as fundamental groups of resolution complexes of possibly non isolated rational singularities \cite[Theorem~42]{KapovichKollar11}.
\end{remark}

\subsection{Notation}  Throughout the paper, $X$ is an irreducible algebraic variety of dimension $n$ over the complex numbers.  All homology and cohomology groups of $X$ are taken with rational coefficients.  In Sections~\ref{affine section} and \ref{resolution section} we consider singular homology and cohomology of boundary complexes and resolution complexes with integer coefficients; this is clearly indicated where it occurs.

\subsection{Acknowledgments}  This project has its origins in discussions following a November 2005 talk by P.~Hacking on the homology of tropical varieties in the Tropical Geometry Seminar at the University of Michigan, organized by S.~Fomin and D.~Speyer.  It came to fruition during the special semester on algebraic geometry at MSRI in Spring 2009, where most of this paper was written and its main results presented \cite{boundarycomplexes-talk}.  This work has benefited from many conversations with friends and colleagues regarding the topology of algebraic varieties and their tropicalizations in the intervening years.  I thank, in particular,  D.~Abramovich, V.~Alexeev,~A.~Bj\"orner, T.~de~Fernex, W.~Fulton, S.~Galatius, P.~Griffiths, C.~Haase, P.~Hacking, R.~Hain, D.~Helm, P.~Hersh, M.~Kahle, M.~Kapranov, E.~Katz, J.~Koll{\'a}r, J.~Lewis,  C.~McCrory, M. Musta\c{t}\v{a}, B.~Nill, D.~Speyer, A.~Stapledon, B.~Sturmfels, B.~Totaro, Z.~Treisman, K.~Tucker, R.~Vakil, and J.~Yu.  I am also grateful to the referee for helpful comments and corrections.

\section{Dual complexes of divisors with simple normal crossings} \label{dual complexes}

Let $X$ be an algebraic variety, and let $D$ be a divisor in $X$ with simple normal crossings.  The dual complex $\Delta(D)$ is a topological space constructed by gluing simplices along faces, and is a $\Delta$-complex in the sense of \cite[Section~2.1]{Hatcher02}.  It has $r$ distinct vertices $v_1, \ldots, v_r$ corresponding to the irreducible components of $D$.  For each nonempty subset $I \subset \{1, \ldots, r \}$, we write $D_I$ for the intersection of the corresponding collection of components
\[
D_I = \bigcap_{i \in I} D_i,
\]
which is a smooth closed algebraic subset of pure codimension $\# I$ in $X$.  For each irreducible component $Y$ of $D_I$, the dual complex $\Delta(D)$ contains an embedded simplex $\sigma_Y$ whose vertices are exactly the $v_i$ for $i \in I$.   So $\Delta(D)$ has one vertex $v_i$ corresponding to each irreducible boundary divisor $D_i$, an edge joining $v_i$ and $v_j$ for each irreducible component of $D_i \cap D_j$, a two-dimensional face with vertices $v_i$, $v_j$, and $v_k$ for each irreducible component of $D_i \cap D_j \cap D_k$, and so on.  The relative interiors of these simplices are disjoint, and $\Delta(D)$ is the quotient space
\[
\Delta(D) = \coprod \sigma_V / \sim
\]
obtained by identifying $\sigma_{Y}$ with the face of $\sigma_Z$ spanned by its vertices, whenever $Y$ is contained in $Z$.  Since the vertices of each simplex are distinct, $\Delta(D)$ is a simplicial complex if and only if every collection of vertices spans at most one face, which is the case if and only if each intersection $D_I$ is either empty or irreducible.

\begin{remark}
We can sequentially blow up the irreducible components of all of the $D_I$, from smallest to largest, to obtain a projective birational morphism $\pi:X' \rightarrow X$ that is an isomorphism over $X \smallsetminus D$ such that $D' = \pi^{-1}(D)$ is a divisor with simple normal crossings and the intersection of any collection of components of $D'$ is either empty or irreducible.  Using this construction, we could choose resolutions so that all of the dual complexes we consider are simplicial, but the resulting complexes would have many more faces; the complex $\Delta(D')$ is the barycentric subdivision of $\Delta(D)$.  For computing examples, the option of working with $\Delta$-complexes with fewer faces is often preferred.
\end{remark}

\begin{remark} 
The dual complexes of divisors with simple normal crossings, although not simplicial complexes, are regular CW complexes, which means that the attaching maps are homeomorphisms.   It follows that these complexes are homeomorphic to the geometric realizations of their face posets \cite[\S 10.3.5]{Kozlov08}.  It is sometimes convenient to work directly with these face posets; for instance, one can apply the techniques of discrete Morse theory.
\end{remark}

\begin{example} \label{three lines}
Suppose $T$ is the complement of the three coordinate lines in the projective plane $\P^2$, which is a two-dimensional algebraic torus.  The boundary divisor $\P^2 \smallsetminus T$ is the union of the three coordinate lines, which has simple normal crossings.  The boundary complex $\Delta(\P^2 \smallsetminus T)$ has three vertices, one for each coordinate line, and since each pair of coordinate lines intersects in a single point, each pair of vertices is joined by a single edge, forming the boundary of a triangle.

Of course, there are many other compactifications of $T$ with simple normal crossing boundaries.  One way to construct other compactifications is by blowing up points in $\P^2 \smallsetminus T$.  For instance, if we blow up the point $[0:0:1]$, which is the intersection of the first two coordinate lines, then the strict transforms of those lines no longer meet, but both meet the exceptional divisor $E$.  On the other hand, if we blow up the point $[1:1:0]$ which is contained in only one of the coordinate lines, then the strict transforms of the coordinate lines still meet pairwise, and the new exceptional divisor $F$ meets only the third coordinate line.  These three different boundary complexes for $T$ are shown below.

\begin{picture}(400,110)(8,0)
\put(50,0){\makebox(0,0)[b]{$\Delta(\P^2 \smallsetminus T)$}}
\put(200,0){\makebox(0,0)[b]{$\Delta(\Bl_{[0:0:1]} \P^2 \smallsetminus T)$}}
\put(350,0){\makebox(0,0)[b]{$\Delta(\Bl_{[1:1:0]} \P^2 \smallsetminus T)$}}

\put(50,30){\circle*{3}}
\put(80,80){\circle*{3}}
\put(20,80){\circle*{3}}

\put(50,30){\line(3,5){30}}
\put(50,30){\line(-3,5){30}}
\put(20,80){\line(1,0){60}}

\put(200,30){\circle*{3}}
\put(230,80){\circle*{3}}
\put(170,80){\circle*{3}}
\put(185,55){\circle*{3}}

\put(175, 50){\makebox(0,0){$v_E$}}

\put(200,30){\line(3,5){30}}
\put(200,30){\line(-3,5){30}}
\put(170,80){\line(1,0){60}}

\put(350,30){\circle*{3}}
\put(380,80){\circle*{3}}
\put(320,80){\circle*{3}}
\put(420,104){\circle*{3}}

\put(428, 94){\makebox(0,0){$v_F$}}

\put(350,30){\line(3,5){30}}
\put(350,30){\line(-3,5){30}}
\put(320,80){\line(1,0){60}}
\put(380,80){\line(5,3){40}}
\end{picture}

Now, let $t$ be a point inside the torus $T$, and let $U$ be the quasiaffine variety $T \smallsetminus t$.  Then $\Bl_t \P^2$ is a smooth compactification of $U$ whose boundary is a divisor with simple normal crossings.  In this case, the boundary complex $\Delta\big((\Bl_t \P^2) \smallsetminus U\big)$ consists of a copy of $\Delta(\P^2 \smallsetminus T)$, corresponding to the three coordinate lines and their intersections, plus a single disjoint vertex corresponding to the exceptional fiber of the blowup.  In particular, boundary complexes are neither connected nor pure dimensional in general.
\end{example}

\begin{example} \label{two conics}
Suppose $X$ is the complement of two conics $C_1$ and $C_2$ in $\P^2$ that meet transversely.  Then $\P^2$ is a smooth compactification, with boundary divisor $C_1 \cup C_2$, and the boundary complex has two vertices, corresponding to the two conics, and four edges joining them, corresponding to the four points of intersection.  In this case, the boundary complex $\Delta(C_1 \cup C_2)$ is not a simplicial complex, since two vertices are joined by multiple edges. 

However, we could take instead the blowup $\Bl_{C_1 \cap C_2} \P^2$ of $\P^2$ at the four points of intersection, which is another smooth compactification whose boundary is a divisor with simple normal crossings, and the boundary complex $\Delta(\Bl_{C_1 \cap C_2} \P^2 \smallsetminus X)$ is simplicial.

\begin{picture}(420,110)(0,0)

\put(100,0){\makebox(0,0)[b]{$\Delta(C_1 \cup C_2)$}}
\put(300,0){\makebox(0,0)[b]{$\Delta(\Bl_{C_1 \cap C_2} \P^2 \smallsetminus X)$}}

\put(40,60){\circle*{3}}
\put(160,60){\circle*{3}}
\put(240,60){\circle*{3}}
\put(360,60){\circle*{3}}

\put(300,70){\circle*{3}}
\put(300,90){\circle*{3}}
\put(300,50){\circle*{3}}
\put(300,30){\circle*{3}}

\qbezier(40,60)(100,120)(160,60)
\qbezier(40,60)(100,80)(160,60)
\qbezier(40,60)(100,40)(160,60)
\qbezier(40,60)(100,0)(160,60)

\qbezier(240,60)(300,120)(360,60)
\qbezier(240,60)(300,80)(360,60)
\qbezier(240,60)(300,40)(360,60)
\qbezier(240,60)(300,0)(360,60)

\end{picture}

\end{example}

\begin{example} \label{RP boundary}
Let $T \cong (\C^*)^n$ be an algebraic torus.  Then the boundary divisor of any smooth toric compactification has simple normal crossings, and the boundary complex is naturally identified with the link of the vertex in the corresponding fan, which is a triangulation of a sphere of dimension $n-1$.

One smooth toric compactification of $T$ is the product of $n$ projective lines $(\P^1)^n$.  The involution $t \mapsto t^{-1}$ on $T$ extends to an involution on $(\P^1)^n$, and induces the antipodal map on the sphere $\Delta((\P^1)^n \smallsetminus T)$.  Then the quotient $X$ of $T$ by this involution is affine, and the quotient of $(\P^1)^n$ by the extended involution is a compactification of $X$ whose boundary is a divisor with simple normal crossings.  It follows that the boundary complex of $X$ has the homotopy type of the real projective space $\R\P^{n-1}$.  This example, suggested by J. Koll\'ar, shows that the boundary complex of an affine variety does not in general have the homotopy type of a wedge sum of spheres.
\end{example}

The following example shows that the simple homotopy type of every finite simplicial complex is realized by a boundary complex.

\begin{example}  \label{ex:arbitrarycx}
Let $\Delta$ be a subcomplex of the boundary complex of the $n$-simplex, which we may think of as a set of proper subsets of $\{0, \ldots, n \}$.  Fix homogeneous coordinates $[x_0: \cdots :x_n ]$ on $\P^n$ and, for each proper subset $I \subset \{ 0, \ldots, n \}$, let $L_I$ be the linear subspace where $x_i$ is nonzero if and only if $i$ is in $I$.  So $L_I$ has dimension $\# I - 1$.  Let
\[
X = \P^n \smallsetminus \Big( \bigcup_{I \in \Delta} L_I \Big).
\]
We claim that the barycentric subdivison of $\Delta$ is a boundary complex for $X$.  To see this, note that $\P^n$ is a smooth compactification of $X$.  Then, the space $X'$ obtained by blowing up first the zero dimensional spaces $L_I$ for $I \in \Delta$, then the strict transforms of the lines, and so on, is a smooth compactification of $X$ whose boundary $\partial X'$ is a divisor with simple normal crossings.  The components of $\partial X'$ correspond to the sets $I \in \Delta$, any nonempty intersection of components is irreducible, and an intersection is nonempty if and only if the corresponding subsets of $\{0, \ldots, n\}$ form a totally ordered chain.  Therefore, $\Delta(\partial X')$ is the barycentric subdivision of $\Delta$.
\end{example}

Similarly, the simple homotopy type of any finite filtered simplicial complex can be realized as one of the filtered complexes $\Delta(E_1) \subset \cdots \Delta(E_s)$ or $\Delta(\partial X^+) \subset \Delta(Y_2 \cup \partial X^+) \subset \cdots \subset \Delta(Y_r \cup \partial X^+)$ appearing in Theorem~\ref{pairs}.  In particular, for pairs $(\Delta(E \cup \partial X^+), \Delta(\partial X^+))$ such that the inclusion of the boundary complex is a homotopy equivalence, arbitrary Whitehead torsion is possible.

\section{Basics of weight filtrations} \label{weight basics}

Let $X$ be an algebraic variety of dimension $n$ over the complex numbers.  All homology and cohomology groups that we consider in this section are with rational coefficients.  

By work of Deligne \cite{Deligne75}, the rational cohomology groups of $X$ carry a canonical filtration
\[
W_0H^k(X) \subset \cdots \subset W_{2k}H^k(X) = H^k(X;\Q)
\]
that is strictly compatible with cup products, pullbacks under arbitrary morphisms, and long exact sequences of pairs.  This filtration is part of a mixed Hodge structure; the $j$th graded piece
\[
\Gr_j^W H^k(X) = W_j H^k(X)/W_{j-1}H^k(X)
\]
carries a natural pure Hodge structure of weight $j$.  See \cite{Durfee83} for a gentle and illuminating introduction to mixed Hodge theory, \cite{KulikovKurchanov98} for a survey of basic results, and \cite{PetersSteenbrink08} for a more comprehensive treatment with complete proofs and further references.  The proofs of many of the following basic properties of weight filtrations use Hodge theoretic arguments.  However, we will accept these properties as given and make no further reference to Hodge theory.  

\subsection{Combinatorial restrictions on weights} We say that $H^k(X)$ has weights in a subset $I \subset \{0, \ldots, 2k\}$ if the $j$th graded piece $\Gr_j^W H^k(X)$ vanishes for $j \not \in I$.  The weights that can appear in $H^k(X)$ depend on the compactness, smoothness, and dimension of $X$.  More precisely,
\begin{enumerate}
\item If $X$ is compact then $H^k(X)$ has weights in $\{0, \ldots, k\}$.
\item If $X$ is smooth then $H^k(X)$ has weights in $\{k, \ldots, 2k\}$.
\item If $k$ is greater than $n$ then $H^k(X)$ has weights in $\{0, \ldots, 2n\}$.
\end{enumerate}
In other words, if $X$ is compact then $W_k H^k(X)$ is $H^k(X)$, if $X$ is smooth then $W_{k-1}H^k(X)$ is zero, and if $n$ is less than $k$ then $W_{2n}H^k(X)$ is $H^k(X)$.  If $X$ is smooth and compact then (1) and (2) together say that $k$ is the only weight appearing in $H^k(X)$.

\begin{remark}
Although topological properties of $X$ such as compactness and dimension place combinatorial restrictions on the weights appearing in $H^k(X)$, the weight filtration is not a topological invariant of $X$; see \cite{SteenbrinkStevens84}.  Nevertheless, some pieces of the weight filtration are topological invariants, and pieces near the extreme ends of the weights allowed by these combinatorial restrictions tend to have particularly nice interpretations.  For instance, if $X$ is compact then $W_{k-1} H^k(X)$ is the kernel of the natural map from $H^k(X)$ to the intersection cohomology group $I\!H^k(X;\Q)$ \cite{Weber04, HanamuraSaito06}.
\end{remark}

 The singular homology, Borel-Moore homology, and compactly supported cohomology groups of $X$ carry weight filtrations with similar restrictions.  For simplicity, we consider only cohomology and compactly supported cohomology.  The weight filtration on compactly supported cohomology
\[
W_0H^k_c(X) \subset \cdots \subset W_{2k} H^k_c(X) = H^k_c(X;\Q)
\]
agrees with the weight filtration on $H^k(X)$ when $X$ is compact.

\subsection{Duality and exact sequences} When $X$ is smooth, the weight filtration on compactly supported cohomology satisfies the following duality with the weight filtration on cohomology.

\begin{PD}[\cite{PetersSteenbrink08}, Theorem~6.23]
If $X$ is smooth then the natural perfect pairing
\[
H^k_c(X) \times H^{2n-k}(X) \rightarrow \Q
\]
induces perfect pairings on graded pieces
\[
\Gr_j^W H^k_c(X) \times \Gr_{2n-j}^W H^{2n-k}(X) \rightarrow \Q,
\]
for $0 \leq j \leq 2k$.
\end{PD}

\noindent Similarly, long exact sequences of cohomology that arise naturally in geometry often descend to long exact sequences on graded pieces of the weight filtration.  The two sequences that we will use are the long exact sequence of a simple normal crossing pair, and the Mayer-Vietoris sequence of the mapping cylinder for a resolution of singularities.  We follow the usual convention that $W_j H^k(X)$ is $H^k(X;\Q)$ for $j$ greater than $2k$.

\begin{pairs} [\cite{PetersSteenbrink08}, Proposition~5.54]
Let $D$ be a divisor with simple normal crossings in $X$.  Then the long exact sequence of the pair $(X,D)$
\[
\cdots \rightarrow H^k(X) \rightarrow H^k(D) \rightarrow H^{k+1}_c(X \smallsetminus D) \rightarrow H^{k+1} (X) \rightarrow \cdots
\]
induces long exact sequences of graded pieces
\[
\cdots \rightarrow \Gr_j^W H^k(X) \rightarrow \Gr_j^W H^k(D) \rightarrow \Gr_j^W H^{k+1}_c(X \smallsetminus D) \rightarrow \Gr_j^W H^{k+1}(X) \rightarrow \cdots
\]
for $0 \leq j \leq 2n$.
\end{pairs}

\begin{MV}  [\cite{PetersSteenbrink08}, Corollary~5.37]
Let $\pi: X' \rightarrow X$ be a proper birational morphism with discriminant $V$ and exceptional locus $E$.  Then the long exact sequence of the mapping cylinder
\[
\cdots \rightarrow H^k(X) \rightarrow H^k(X') \oplus H^k(V) \rightarrow H^k(E) \rightarrow H^{k+1}(X) \rightarrow \cdots
\]
induces long exact sequences of graded pieces
\[
\cdots \rightarrow \Gr_j^W H^k(X) \rightarrow \Gr_j^W H^k(X') \oplus \Gr_j^W H^k(V) \rightarrow \Gr_j^W H^k(E) \rightarrow \Gr_j^W H^{k+1}(X) \rightarrow \cdots
\]
for $0 \leq j \leq 2n$.
\end{MV}

Using the basic tools above, we can express the cohomology of $X$ in terms of the cohomology groups of those of a compactification of a resolution, the boundary and exceptional divisors, and the discriminant.  When the discriminant is reasonably well-understood, this reduces many questions about the weight filtration on the cohomology of $X$ to questions about the weight filtrations on the cohomology of the boundary and exceptional divisors, which can be understood combinatorially as follows.

\subsection{Divisors with simple normal crossings}  Let $D$ be a divisor with simple normal crossings in $X$, and assume that $D$ is complete.  Let $D_1, \ldots, D_r$ be the irreducible components of $D$.  Then there is a combinatorial complex of $\Q$-vector spaces
\[
0 \rightarrow \bigoplus_{i=1}^r H^j(D_i) \xrightarrow{d_0} \bigoplus_{i_0 < i_1} H^j(D_{i_0} \cap D_{i_1}) \xrightarrow{d_1} \bigoplus_{i_0 < i_1 < i_2} H^j(D_{i_0} \cap D_{i_1} \cap D_{i_2}) \xrightarrow{d_2} \cdots,
\]
with differentials given by signed sums of restriction maps, and the cohomology of this complex gives the $j$th graded pieces of the weight filtrations on the cohomology groups of $D$ \cite[Chapter~4, \S2]{KulikovKurchanov98}.  More precisely, there are natural isomorphisms 
\[
\Gr_j^W H^{i+j}(D) \cong \frac{\ker d_i}{\im d_{i-1}},
\]
for all $i$.  In the special case when $j$ is zero, the complex above computes the cellular cohomology of the dual complex $\Delta(D)$, so we obtain natural isomorphisms
\[
W_0 H^j(D) \cong H^j (\Delta(D)),
\]
for all $j$.  

In many cases, these tools allow one to express the graded pieces of the weight filtration on the cohomology of $X$ combinatorially in terms of natural maps between cohomology groups of smooth strata in a suitable resolution of a compactification, together with the cohomology groups of the discriminant.  See Sections~\ref{boundary section} and \ref{singularities section} for details.  When the weight filtration on the cohomology groups of the discriminant are particularly simple, as is the case when $X$ has isolated singularities or, more generally, when the discriminant locus of some resolution is smooth and compact, then the resulting expressions are particularly satisfying.  See Theorem~\ref{smooth compact discriminant} and Remark~\ref{isolated duality}.

\begin{example}
We illustrate these basic results by describing the weight filtration on the first cohomology group of a singular punctured curve.  Let $X$ be a curve of geometric genus one, with one node and three punctures.  We begin by describing the first homology group.  Consider a loop passing through the node, two standard loops around the genus, and loops around two of the three punctures, as shown.

\begin{center}
\resizebox{3in}{1.8in}{\includegraphics{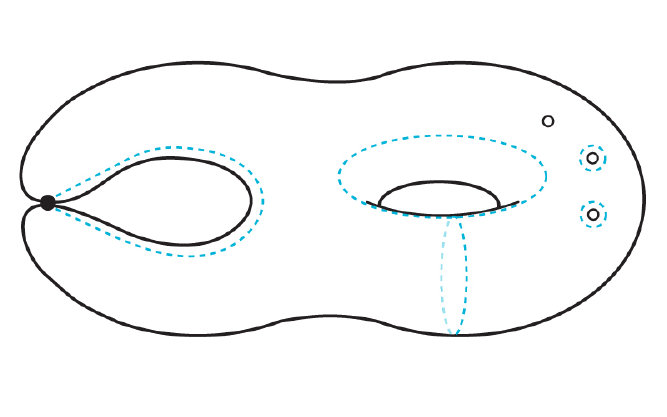}}
\end{center}

\noindent   These loops form a basis for $H_1(X)$.  In the dual basis for cohomology, the loop through the node corresponds to a generator for $W_0 H^1(X)$, the loops around the genus correspond to generators for $\Gr_1^W H^1(X)$, and the loops around the punctures correspond to generators for $\Gr_2^W H^1(X)$.  These correspondences can be can be deduced from the exact sequences discussed above.  For instance, the Mayer-Vietoris sequence for the normalization map $\widetilde X \rightarrow X$, whose exceptional divisor $E$ consists of two points in the preimage of the node, gives an exact sequence
\[
0 \rightarrow \widetilde H^0(E) \rightarrow H^1(X) \rightarrow H^1(\widetilde X) \rightarrow 0.
\]
Since $\widetilde H^0(E)$ and $H^1(\widetilde X)$ have pure weights zero and one, respectively, it follows that $W_0 H^1(X)$ is the kernel of the pullback map to $H^1(\widetilde X)$.  In particular, $W_0 H^1(X)$ is the subspace orthogonal to the image of $H_1(\widetilde X)$, which is spanned by the loops around the genus and the loops around the punctures.  This shows that $W_0 H^1(X)$ has rank one, and is spanned by the dual basis vector corresponding to the loop through the node.  The fact that the basis vectors dual to the loops around the genus span $\Gr_1^WH^1(X)$ can be seen similarly, using Poincar\'e Duality and the exact sequence for the pair $(X^+, D)$, where $X^+$ is the smooth compactification of $\widetilde X$ and $D$ is the boundary divisor, consisting of three points filling the punctures.

\smallskip 
\begin{center}
\resizebox{3in}{1.5in}{\includegraphics{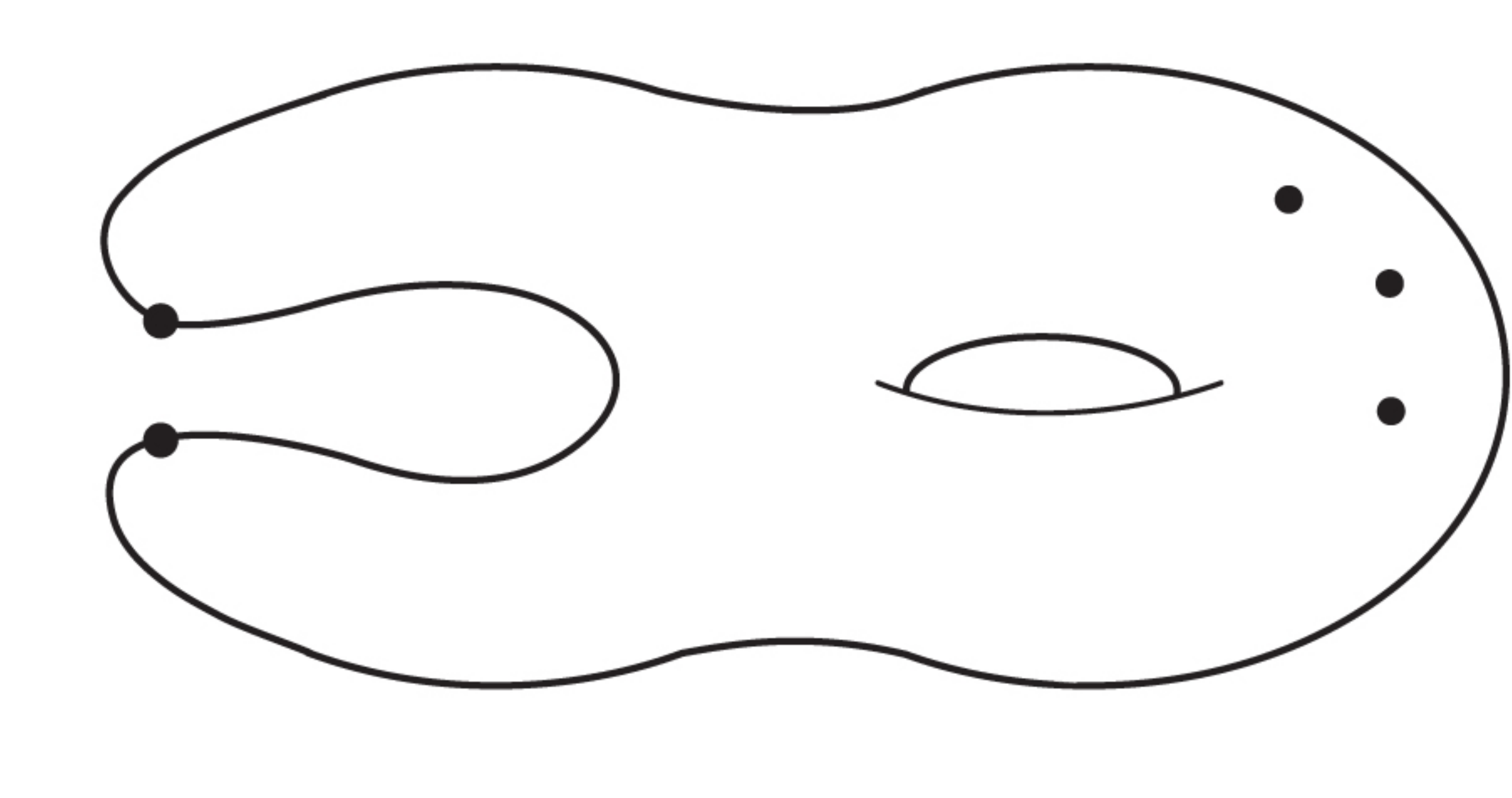}}
\end{center}

\noindent Altogether, these elementary arguments give natural isomorphisms
\[
\Gr_j^W H^1(X) \cong \left \{ \begin{array}{ll} \widetilde H^0(E) & \mbox{ for } j = 0, \\
										        H^1(X^+) & \mbox{ for } j = 1, \\
										        \widetilde H_0(D) & \mbox{ for } j = 2,
										        \end{array} \right.
\]
which are special cases of the complete characterization of the graded pieces of the weight filtration for varieties whose singular locus is smooth and proper, given in Theorem~\ref{smooth compact discriminant}.\footnote{For weight one, Theorem~\ref{smooth compact discriminant}  identifies $\Gr_1^W H^1(X)$ with $W_1 H^1(\widetilde X)$.  The additional isomorphism $W_1 H^1(\widetilde X) \cong H^1 (X^+)$ follows from Poincar\'e Duality and the pair sequence for $(X^+, D)$, using the fact that $X$ is a curve.}
\end{example}

\section{Weights from the boundary} \label{boundary section}

As in the previous two sections, $X$ is a variety of dimension $n$ over the complex numbers and all cohomology groups are with rational coefficients.

Here we use the basic tools from the previous section to express the $j$th graded piece of the weight filtration for large $j$ combinatorially in terms of cohomology groups of smooth complete boundary strata on a suitable compactification of a resolution.  Our approach, using standard exact sequences from topology plus knowledge of the cohomology of complete divisors with simple normal crossings, is similar in spirit to that followed by El Zein \cite{ElZein83}, but substantially less technical, since we are concerned only with the weight filtration and not the full mixed Hodge structure.  

\begin{remark}
The observation that the weight filtration can be understood separately from the mixed Hodge structure is not new.  For instance, Totaro has used the work of Guill\'en and Navarro Aznar \cite{GuillenNavarroAznar02} to introduce a weight filtration on the cohomology of real algebraic varieties \cite{Totaro02}, which is the subject of recent work of McCrory and Parusi\'nski \cite{McCroryParusinski09}.
\end{remark}

We begin by considering the case where $X$ is smooth, expressing pieces of the weight filtration on $H^*_c(X)$ in terms of the cohomology groups of a suitable smooth compactification and its boundary divisor.  The following result is known to experts but, lacking a suitable reference, we include a proof.

\begin{proposition}  \label{compactification weights}
Let $X$ be a smooth variety, with $X^+$ a smooth compactification such that the boundary $\partial X^+$ is a divisor with simple normal crossings. Then the associated graded pieces of the weight filtration $\Gr_j^W H^k_c(X)$ vanish for negative $j$ and for $j > k$, and there are natural isomorphisms
\[
W_j H^k_c(X) \cong W_j H^{k-1}(\partial X^+) \mbox{ for } 0 \leq j \leq k-2.
\]
The remaining two pieces of the weight filtration on $H^k_c(X)$ are given by
\[
W_{k-1} H^k_c(X) \cong \coker \left[ H^{k-1}(X^+) \rightarrow H^{k-1}(\partial X^+) \right],
\]
and
\[
\Gr_k^W H^k_c(X) \cong \ker \left[ H^{k}(X^+) \rightarrow H^k(\partial X^+) \right].
\]
\end{proposition}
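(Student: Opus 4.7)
The plan is to run the long exact sequence of the pair $(X^+, \partial X^+)$, which by construction has $X^+ \smallsetminus \partial X^+ = X$, and read off everything from weight considerations on its terms. By the Exact Sequence of a Pair recalled above, the sequence
\[
\cdots \to H^{k-1}(X^+) \to H^{k-1}(\partial X^+) \xrightarrow{\delta} H^k_c(X) \to H^k(X^+) \to H^k(\partial X^+) \to \cdots
\]
descends to a long exact sequence on graded pieces $\Gr_j^W$ for every $j$. The inputs I will use are: (i) $X^+$ is smooth and complete, so $H^k(X^+)$ is pure of weight $k$; and (ii) $\partial X^+$ is complete, so $H^k(\partial X^+)$ has weights in $\{0,\ldots,k\}$.

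First I compute the vanishing. For $j>k$, both $\Gr_j^W H^{k-1}(\partial X^+)$ and $\Gr_j^W H^k(X^+)$ vanish by (i) and (ii), so the long exact sequence on graded pieces forces $\Gr_j^W H^k_c(X)=0$; nonnegativity of weights is automatic. Next, for $j=k$, the sequence on graded pieces reads
\[
0 \to \Gr_k^W H^k_c(X) \to H^k(X^+) \to \Gr_k^W H^k(\partial X^+),
\]
since $\Gr_k^W H^{k-1}(\partial X^+)=0$ and $\Gr_k^W H^k(X^+)=H^k(X^+)$. Because $H^k(X^+)$ is pure of weight $k$, strict compatibility implies that an element of $H^k(X^+)$ vanishes in $\Gr_k^W H^k(\partial X^+)$ if and only if it vanishes in $H^k(\partial X^+)$; this identifies $\Gr_k^W H^k_c(X)$ with $\ker[H^k(X^+)\to H^k(\partial X^+)]$.

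For the remaining pieces I work with the unfiltered sequence and track weights using strict compatibility. The combinatorial vanishing just established gives $H^k_c(X) = W_k H^k_c(X)$, and since $H^k(X^+)$ is pure of weight $k$, the subspace $W_{k-1}H^k_c(X)$ is contained in $\ker[H^k_c(X)\to H^k(X^+)]=\im\delta$. Conversely, $\im\delta$ is the image of a space with weights in $\{0,\ldots,k-1\}$, so by strict compatibility it lies in $W_{k-1}H^k_c(X)$. Hence
\[
W_{k-1}H^k_c(X) = \im\delta \cong \coker\bigl[H^{k-1}(X^+)\to H^{k-1}(\partial X^+)\bigr].
\]

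Finally, for $0\le j\le k-2$, I restrict $\delta$ to $W_j H^{k-1}(\partial X^+)$. Its image lies in $W_j H^k_c(X)$ (weights are preserved), and its kernel is $W_j H^{k-1}(\partial X^+)\cap\im[H^{k-1}(X^+)\to H^{k-1}(\partial X^+)]$; since that image is pure of weight $k-1>j$ by strict compatibility, the intersection with $W_j$ is zero. So $\delta$ is injective on $W_j H^{k-1}(\partial X^+)$. Surjectivity onto $W_j H^k_c(X)$ follows again by strict compatibility: any class in $W_j H^k_c(X)\subset W_{k-1}H^k_c(X)=\im\delta$ lifts to a class in $H^{k-1}(\partial X^+)$, which can be chosen in $W_j$. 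This yields the desired natural isomorphism $W_j H^k_c(X)\cong W_j H^{k-1}(\partial X^+)$. The only real subtlety, and where I would concentrate the write-up, is the repeated use of strict compatibility to distinguish $W_j$ from $\Gr_j^W$ and to upgrade the graded-piece vanishings into the filtration-level identifications stated in the proposition.
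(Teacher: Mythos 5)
Your proposal is correct and takes exactly the same route as the paper: the published proof consists of the same long exact sequence of the pair $(X^+,\partial X^+)$ together with the purity of $H^k(X^+)$ and the weight bounds on $H^k(\partial X^+)$, stated in a single sentence. Your write-up simply supplies the strictness arguments that the paper leaves implicit, and it supplies them correctly.
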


\begin{proof}
The proposition follows directly from the long exact pair sequence
\[
\cdots \rightarrow H^{k-1}(X^+) \rightarrow H^{k-1}(\partial X^+) \rightarrow H^k_c(X) \rightarrow H^k(X^+) \rightarrow H^k(\partial X^+) \rightarrow \cdots,
\]
since $H^k(X^+)$ has weight $k$ and $H^k(\partial X^+)$ has weights between zero and $k$.  
\end{proof}

Next, we consider how the weight filtration on the ordinary cohomology of a singular variety relates to that of a resolution.

\begin{proposition}  \label{proper weights}
Let $X$ be a variety of dimension $n$ over the complex numbers, and let $\pi: \widetilde X \rightarrow X$ be a proper birational morphism from a smooth variety.  Suppose that
\begin{enumerate}
\item The weight $j$ is greater than $2n -2$, or
\item The discriminant of $\pi$ is proper, and the weight $j$ is greater than $k$.
\end{enumerate}
Then there is a natural isomorphism $\Gr_j^W H^k(X) \cong \Gr_j^W H^k(\widetilde X)$.
\end{proposition}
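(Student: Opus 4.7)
The plan is to apply the Mayer--Vietoris sequence for the mapping cylinder of $\pi$ stated earlier in the excerpt:
\[
\cdots \to \Gr_j^W H^{k-1}(E) \to \Gr_j^W H^k(X) \to \Gr_j^W H^k(\widetilde X) \oplus \Gr_j^W H^k(V) \to \Gr_j^W H^k(E) \to \cdots,
\]
where $V$ is the discriminant of $\pi$ and $E=\pi^{-1}(V)$ is the exceptional locus. To conclude that the middle arrow is an isomorphism it suffices to show that, under either hypothesis (1) or (2), the four groups $\Gr_j^W H^{k-1}(V)$, $\Gr_j^W H^{k-1}(E)$, $\Gr_j^W H^k(V)$, and $\Gr_j^W H^k(E)$ all vanish.

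For hypothesis (1), the key input is purely a dimension count: since $\pi$ is birational and an isomorphism over $X\smallsetminus V$, the discriminant $V$ is a proper closed subvariety of $X$, so $\dim V\le n-1$, and likewise $\dim E\le n-1$. By combinatorial restriction (3) on weights, the cohomology of a variety of dimension $d$ in degree $\ell$ carries weights in $\{0,\ldots,2d\}$ for $\ell>d$ and, more uniformly, in $\{0,\ldots,2d\}\cup\{\ell,\ldots,2\ell\}$ in general; but for our purposes it is enough that $\Gr_j^W H^\ell(W)=0$ for any variety $W$ with $2\dim W<j$. Hence for $j>2n-2$ all four of the above groups vanish, and the Mayer--Vietoris sequence yields the isomorphism $\Gr_j^W H^k(X)\cong \Gr_j^W H^k(\widetilde X)$.

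For hypothesis (2), the relevant input is properness together with restriction (1). Since $\pi$ is proper and $V$ is assumed proper, $E=\pi^{-1}(V)$ is proper as well. By restriction~(1), the cohomology $H^\ell(V)$ and $H^\ell(E)$ then have weights in $\{0,\ldots,\ell\}$. So for $j>k$ we have $\Gr_j^W H^k(V)=\Gr_j^W H^k(E)=0$, and for $j>k-1$ (which is implied by $j>k$) we have $\Gr_j^W H^{k-1}(V)=\Gr_j^W H^{k-1}(E)=0$. Again the Mayer--Vietoris sequence collapses to give the desired isomorphism.

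There is no real obstacle here; the argument is essentially a bookkeeping exercise, and the only thing to be careful about is that in case (2) one needs properness of $E$, not just of $V$, in order to apply restriction (1) to $H^\ast(E)$. This is immediate from properness of $\pi$, so the proof is complete.
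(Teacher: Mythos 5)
Your argument is correct and is essentially the paper's own proof: both apply the Mayer--Vietoris sequence for the mapping cylinder of $\pi$ on graded pieces and kill the terms involving $V$ and $E$ using the dimension bound $\dim V,\dim E\le n-1$ in case (1) and properness of $V$ and $E$ in case (2). The only cosmetic remarks are that the vanishing of $\Gr_j^W H^{k-1}(V)$ is not actually needed (only the three terms $\Gr_j^W H^{k-1}(E)$, $\Gr_j^W H^k(V)$, $\Gr_j^W H^k(E)$ enter), and your parenthetical ``more uniform'' weight range is imprecise, but the statement you actually use --- $\Gr_j^W H^\ell(W)=0$ whenever $j>2\dim W$ --- is the right one.
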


\begin{proof}
Let $V$ be the discriminant of $\pi$, and let $E = \pi^{-1}(V)$ be the exceptional divisor.  The Mayer-Vietoris sequence of mixed Hodge structures associated to $\pi$ has the form
\[
\cdots \rightarrow H^{k-1}(E) \rightarrow H^k(X) \rightarrow H^k(\widetilde X) \oplus H^k(V) \rightarrow H^k(E) \rightarrow \cdots.
\]
The proposition follows, since $\Gr_j^W H^k(V)$ and $\Gr_j^W H^k(E)$ vanish for $j$ greater than $2n - 2$, and for $j > k$ if $V$ is proper.
\end{proof}

Using the preceding propositions and Poincar\'e duality, we now show that the homology of the boundary complex gives the $2n$th graded piece of the weight filtration.

\begin{theorem}  \label{boundary homology}
Let $\pi: \widetilde X \rightarrow X$ be a proper birational morphism from a smooth variety, and let $X^+$ be a compactification of $\widetilde X$ whose boundary $\partial X^+$ is a divisor with simple normal crossings.  Then, for all nonnegative integers $k$, there is a natural isomorphism
\[
\widetilde H_{k-1}\big( \Delta(\partial X^+)\big) \cong \Gr_{2n}^W H^{2n-k}(X).
\]
\end{theorem}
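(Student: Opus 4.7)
The plan is to obtain the isomorphism by composing a short chain of natural isomorphisms, each supplied by results already stated in this section or in Section~\ref{weight basics}. The chain runs through four steps: (i) passage from $X$ to $\widetilde X$ using Proposition~\ref{proper weights}, (ii) Poincaré Duality on smooth $\widetilde X$, (iii) the identification of $W_0 H^k_c(\widetilde X)$ with $W_0 H^{k-1}(\partial X^+)$ from Proposition~\ref{compactification weights}, and (iv) the combinatorial description of $W_0 H^{k-1}(\partial X^+)$ as the cellular cohomology of $\Delta(\partial X^+)$.

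For step (i), since the weight $j = 2n$ strictly exceeds $2n - 2$, hypothesis~(1) of Proposition~\ref{proper weights} is satisfied unconditionally, giving a natural isomorphism $\Gr_{2n}^W H^{2n-k}(X) \cong \Gr_{2n}^W H^{2n-k}(\widetilde X)$. We may therefore work entirely on the smooth model $\widetilde X$. For step (ii), Poincaré Duality provides the natural perfect pairing
\[
\Gr_0^W H^k_c(\widetilde X) \times \Gr_{2n}^W H^{2n-k}(\widetilde X) \rightarrow \Q.
\]
Combined with the fact that cellular cohomology and homology of the finite complex $\Delta(\partial X^+)$ are $\Q$-dual, it therefore suffices to exhibit a natural isomorphism $\Gr_0^W H^k_c(\widetilde X) \cong H^{k-1}(\Delta(\partial X^+))$.

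For steps (iii) and (iv), assume first that $k \geq 2$. Proposition~\ref{compactification weights} with $j = 0$ gives $W_0 H^k_c(\widetilde X) \cong W_0 H^{k-1}(\partial X^+)$, and these coincide with their bottom graded pieces $\Gr_0^W$. Since $\partial X^+$ is a complete simple normal crossing divisor, the combinatorial complex from Section~\ref{weight basics} that computes $\Gr_0^W H^\bullet(\partial X^+)$ is exactly the cellular cochain complex of the dual complex, so $\Gr_0^W H^{k-1}(\partial X^+) \cong H^{k-1}(\Delta(\partial X^+))$. This is the step that does the real work; concatenating all the natural isomorphisms yields the theorem for $k \geq 2$.

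The main obstacle is the handling of the low-degree cases $k = 0$ and $k = 1$, where the shape of Proposition~\ref{compactification weights} is slightly different. For $k = 1$ one uses the formula $\Gr_0^W H^1_c(\widetilde X) \cong \coker[H^0(X^+) \to H^0(\partial X^+)]$, and checks that its $\Q$-dual is naturally $\widetilde H_0(\Delta(\partial X^+))$, since connected components of $\partial X^+$ are in natural bijection with connected components of $\Delta(\partial X^+)$ (two irreducible components contribute vertices in the same component of $\Delta(\partial X^+)$ precisely when they lie in the same component of $\partial X^+$). For $k = 0$ both sides vanish unless $X$ is compact, and when $X$ is compact the resolution $\widetilde X = X^+$ is also compact, so $\partial X^+ = \emptyset$ and both sides equal $\Q$ under the standard convention $\widetilde H_{-1}(\emptyset) = \Q$. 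Beyond this careful bookkeeping of boundary cases, every isomorphism in the chain is formal.
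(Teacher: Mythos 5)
Your proof is correct and follows essentially the same route as the paper's: Proposition~\ref{proper weights} to pass to $\widetilde X$, Poincar\'e Duality to dualize into $W_0 H^k_c(\widetilde X)$, Proposition~\ref{compactification weights} to move to the boundary, and the combinatorial description of $W_0$ of a complete simple normal crossing divisor as the cohomology of its dual complex. The paper absorbs your low-degree bookkeeping by phrasing the middle isomorphism with reduced cohomology, $W_0 H^k_c(\widetilde X) \cong W_0 \widetilde H^{k-1}(\partial X^+)$, but the content is the same.
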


\begin{proof}
By Proposition~\ref{proper weights}, $\Gr_{2n}^W H^{2n-k}(X)$ is naturally isomorphic to $\Gr_{2n}^W H^{2n-k}(\widetilde X)$.  Since $\widetilde X$ is smooth, the latter is Poincar\'e dual to $W_0 H^k_c (\widetilde X)$.  Now, by Proposition~\ref{compactification weights}, there is a natural isomorphism
\[
 W_0 H^k_c(X) \cong W_0 \widetilde H^{k-1}(\partial X^+; \Q),
 \]
and since $\partial X^+$ is a divisor with simple normal crossings, the weight zero cohomology group $W_0 \widetilde H^{k-1}(\partial X^+)$ is isomorphic to the cohomology group of the dual complex $\widetilde H^{k-1}(\Delta(\partial X^+); \Q)$.  The theorem follows, since the cohomology group $\widetilde H^{k-1}(\Delta(\partial X^+); \Q)$ is dual to the homology group $\widetilde H_{k-1} (\Delta(\partial X^+); \Q)$.
\end{proof}

\section{Simple homotopy equivalences from weak factorizations} \label{homotopy equivalences}

Having proved that the reduced homology of the boundary complex of $X$ is naturally isomorphic to the $2n$th graded piece of the weight filtration on the singular cohomology of $X$, we now show that the homotopy type of the boundary complex is independent of all choices, using toroidal weak factorization of birational maps and a lemma of Stepanov. 

Roughly speaking, the toroidal weak factorization theorem of Abramovich, Karu, Matsuki, and W{\l}odarczyk says that any birational map that is an isomorphism away from divisors with simple normal crossings can be factored as a series of blowups and blowdowns along admissible centers, which are smooth subvarieties that have simple normal crossings with the given divisors.  See \cite{AKMW02, Wlodarczyk03} and \cite[Theorem~5-4-1]{Matsuki00} for details.  The preimages of the given divisor under such a blowup again has simple normal crossings, and Stepanov's Lemma says that the dual complex of the new divisor is either unchanged or is obtained from the old dual complex by a combinatorial operation that preserves homotopy type, such as barycentric subdivision or gluing on a cone over a contractible subcomplex, as in Examples~\ref{three lines} and \ref{two conics}, above.

Let $X$ be an algebraic variety and let $D$ be a divisor on $X$ with simple normal crossings.  Recall that a subvariety $Z \subset X$ has \emph{simple normal crossings} with $D$ if every point $z$ in $Z$ has a neighborhood with local coordinates in which each component of $D$ that contains $z$ is the vanishing locus of a single coordinate and $Z$ is the vanishing locus of some subset of the coordinates \cite[p.~137]{Kollar07}.

\begin{Stepanov} \cite{Stepanov06}
Let $Z \subset X$ be a closed subvariety that has simple normal crossings with $D$.  Let $\pi$ be the blowup along $Z$.  Then $D' = \pi^{-1}(D)$ is a divisor with simple normal crossings, and $\Delta(D')$ is simple homotopy equivalent to $\Delta(D)$.  More precisely,
\begin{enumerate}
\item If $Z$ is not contained in $D$, then $\Delta(D')$ and $\Delta(D)$ are naturally isomorphic.
\item If $Z$ is an irreducible component of $D_I$, then $\Delta(D')$ is naturally isomorphic to the barycentric subdivision of $\Delta(D)$ along the face corresponding to $Z$.
\item If $Z$ is properly contained in a component $V$ of the intersection $D_I$ of all components of $D$ that contain $Z$, then $\Delta(D')$  is obtained from $\Delta(D)$ by gluing on the cone over $\sigma_V$ and then attaching copies of cones over faces that contain $\sigma_V$.
\end{enumerate}
\end{Stepanov}

\noindent 

\begin{remark} 
In case (3), a strong deformation retraction from $\Delta(D')$ to $\Delta(D)$ can be given by flowing the cone point to a vertex of $\sigma_V$ and extending linearly.  Such discrete Morse flows preserve simple homotopy type \cite{Forman98}.  For details on the theory of simple homotopy types and related invariants, such as Whitehead torsion, see \cite{Cohen73}.
\end{remark}

Using Stepanov's Lemma, we can give simple homotopy equivalences between dual complexes of simple normal crossings divisors whenever the complexes can be obtained by blowups and blowdowns along smooth admissible centers.  Furthermore, we can often choose these homotopy equivalences to be compatible with suitable subcomplexes.  Here a \emph{subcomplex} of a $\Delta$-complex is a closed subset that is a union of simplices, and a \emph{filtered $\Delta$-complex} is a complex $\Delta$ with a chain of subcomplexes
\[
\Delta_1 \subset \cdots \subset \Delta_t = \Delta.
\]
Simple homotopy equivalence is an equivalence relation on regular CW complexes that is generated by barycentric subdivisions, elementary collapses, and their inverses.  A \emph{simple homotopy equivalence} between filtered $\Delta$-complexes $\big(\Delta, \{ \Delta_i \}\big)$ and $\big(\Delta', \{\Delta'_j\}\big)$ is a simple homotopy equivalence $\Delta \simeq \Delta'$ that respects the filtrations and induces simultaneous simple homotopy equivalences $\Delta_i \simeq \Delta'_i$ for all $i$, and two filtered complexes have the same \emph{simple homotopy type} if there exists a simple homotopy equivalence between them.

The complexes that we consider are dual complexes of simple normal crossings divisors in compactifications of weak log resolutions whose boundaries are divisors with simple normal crossings. 

\begin{definition}
Let $X$ be an algebraic variety, with $Y_1, \ldots,Y_s$ a collection of closed algebraic subsets.  A \emph{weak log resolution} of $X$ with respect to $Y_1, \ldots, Y_s$ is a proper birational morphism from a smooth variety to $X$ such that the preimage of each $Y_i$ is a divisor with simple normal crossings.  
\end{definition}

\noindent We do not require a log resolution to be an isomorphism over every smooth point in the complement of $Y$.  In particular, even when $Y$ is contained in the singular locus, we allow \emph{weak} resolutions of $X$ that are not necessarily isomorphisms over the smooth locus.

The main tools in the proof are Stepanov's Lemma and toroidal weak factorization of birational maps.  In order to apply toroidal weak factorization, we reduce to the case of comparing two log resolutions with the same discriminants in $X$.  The key step in this reduction is the following lemma which says that we can increase the discriminant without changing the homotopy types of the filtered complexes.   Recall that the \emph{discriminant} $\discr (\pi)$ of a projective birational morphism $\pi$ to $X$ is the smallest closed subset $V$ in $X$ such that $\pi$ is an isomorphism over $X \smallsetminus V$.

\begin{lemma}  \label{enlarge discriminant}
Let $X$ be an algebraic variety, with $Y_1 \subset \cdots \subset Y_s$ a chain of closed algebraic subset and $\pi_1 : X_1 \rightarrow X$ a weak log resolution with respect to $Y_1, \ldots, Y_s$.  Then, for any closed subset $V$ of $X$ that contains $\discr(\pi_1)$, there is a log resolution $\pi_2: X_2 \rightarrow X$ with respect to $Y_1, \ldots, Y_s$ such that
\begin{enumerate}
\item The log resolution $\pi_2$ is an isomorphism over $X \smallsetminus V$,
\item The preimage of $V$ in $X_2$ is a divisor with simple normal crossings, and
\item The filtered complex
\[
\Delta(\pi_2^{-1}(Y_1)) \subset \cdots \subset \Delta(\pi_2^{-1}(Y_s))
\]
is simple homotopy equivalent to $\Delta(\pi_1^{-1}(Y_1)) \subset \cdots \subset \Delta(\pi_2^{-1}(Y_s))$.
\end{enumerate}
\end{lemma}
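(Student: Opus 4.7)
The plan is to construct $X_2$ by composing $\pi_1$ with a further sequence of blowups of $X_1$ along smooth centers supported in $\pi_1^{-1}(V)$ and having simple normal crossings with the evolving total transform of $E_s^{(1)} := \pi_1^{-1}(Y_s)$, and then to apply Stepanov's Lemma inductively to produce the required filtered simple homotopy equivalence. Write $E_i^{(k)}$ for the preimage of $Y_i$ at the $k$th intermediate stage and $\mu_k$ for the morphism to $X$. The containment $Y_i \subset Y_s$ implies that $E_i^{(k)}$ is a union of irreducible components of $E_s^{(k)}$, so $\Delta(E_i^{(k)})$ embeds in $\Delta(E_s^{(k)})$ as the subcomplex spanned by those vertices $v_D$ with $\mu_k(D) \subset Y_i$.

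For the construction, invoke strong principalization of an ideal sheaf with SNC boundary (Hironaka, in the effective forms of Bierstone-Milman or W{\l}odarczyk): applied to $X_1$, the SNC divisor $E_s^{(1)}$, and the reduced closed subscheme $\pi_1^{-1}(V)$, this produces a finite sequence of blowups
\[
X_2 = X_1^{(N)} \to \cdots \to X_1^{(0)} = X_1
\]
along smooth centers $Z_k$, each contained in the support of the total transform of $\pi_1^{-1}(V)$ and each having SNC with the total transform of $E_s^{(1)}$, such that in $X_2$ both $\pi_2^{-1}(V)$ and its union with the total transform of $E_s^{(1)}$ are SNC divisors. Because every $Z_k$ lies above $V$ in $X$, the composition $\pi_2$ is an isomorphism over $X \smallsetminus V$, which is (1); the SNC condition on $\pi_2^{-1}(V)$ is (2); and $\pi_2$ is a weak log resolution with respect to $Y_1, \ldots, Y_s$ because each $E_i^{(N)}$ is a subdivisor of the SNC divisor $E_s^{(N)}$.

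For (3), apply Stepanov's Lemma at each blowup. Since $Z_k$ has SNC with $E_s^{(k)}$, the lemma gives a simple homotopy equivalence $\Delta(E_s^{(k+1)}) \simeq \Delta(E_s^{(k)})$ of one of the three types in its statement: a canonical identification when $Z_k \not\subset E_s^{(k)}$, a barycentric subdivision along $\sigma_{Z_k}$ when $Z_k$ is a component of some stratum of $E_s^{(k)}$, or the attachment of cones over the smallest face containing $\sigma_{Z_k}$ otherwise. The main obstacle is to verify that each such equivalence respects the subcomplexes $\Delta(E_i^{(k)})$ and restricts to simple homotopy equivalences $\Delta(E_i^{(k+1)}) \simeq \Delta(E_i^{(k)})$ for every $i$, yielding a filtered equivalence. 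Since $Z_k$ also has SNC with each subdivisor $E_i^{(k)}$, Stepanov's Lemma applies anew at every level, and a case-by-case inspection shows that the new exceptional vertex lies in $\Delta(E_i^{(k+1)})$ exactly when $\mu_k(Z_k) \subset Y_i$ (equivalently, when some vertex of $\sigma_{Z_k}$ lies in $\Delta(E_i^{(k)})$), and that the induced modification on $\Delta(E_i^{(k)})$ is again one of Stepanov's three types — possibly a different type than at level $s$, but in every subcase a simple homotopy equivalence that embeds compatibly as a subcomplex of the top-level modification. One checks that the discrete Morse flow realizing the Stepanov collapse at the top level can be chosen to restrict to the corresponding flow at each sublevel, which gives a simple homotopy equivalence of filtered complexes from the $(k{+}1)$st stage to the $k$th. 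Composing these filtered equivalences for $k = 0, \ldots, N-1$ yields the simple homotopy equivalence of filtered complexes asserted in (3).
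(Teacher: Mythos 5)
Your proposal follows the paper's proof essentially verbatim: strong principalization to construct $\pi_2$, then Stepanov's Lemma applied one blowup at a time, with compatible discrete Morse flows supplying the filtered simple homotopy equivalence. The one point you defer to ``one checks'' is precisely where the paper does its work: in the case where the top-level complex is merely barycentrically subdivided but some sublevel $\Delta(E_i^{(k)})$ instead acquires new cones (so the natural homeomorphism does not respect the filtration), the flow must send the exceptional vertex $v_E$ to a vertex $v_j$ corresponding to a component of $E_i^{(k)}$ for the \emph{smallest} $i$ with $Z_k \subset E_i^{(k)}$ --- this choice of target vertex is what makes a single flow restrict correctly to every level of the filtration.
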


\begin{proof}
By Hironaka's strong principalization theorem \cite[Theorem~3.26]{Kollar07}, there is a proper birational morphism $\pi_2$ constructed as the composition of $\pi_1$ with a series of blowups along smooth centers in the preimage of $V$ that have simple normal crossings with the preimage of $Y_s$ such that $\pi_2$ satisfies (1) and (2).  We now use Stepanov's Lemma to prove that (3) is satisfied.  It will suffice to consider how the filtered complex changes under the blowup of $X_1$ along a smooth subvariety $Z$ that has normal crossings with the preimage of $Y_s$.

Let $D(i) = \pi_1^{-1} (Y_i),$ and let $D'(i)$ be the preimage of $D(i)$ under the blowup along $Z$.  We must show that the filtered complex
\[
\Delta(D'(1)) \subset \cdots \subset \Delta(D'(s)) = \Delta'
\]
is homotopy equivalent to $\Delta(D(1)) \subset \cdots \subset \Delta(D(s)) = \Delta$.  If $D(s)$ does not contain $Z$ then, by part (1) of Stepanov's Lemma, the two filtered complexes are naturally isomorphic.  Otherwise, we consider two cases according to how $Z$ sits inside $D(s)$.  

First, suppose $Z$ is a component of some intersection $D_I$ of components of $D(s)$.  By Stepanov's Lemma, $\Delta'$ is the barycentric subdivision of $\Delta$ along the face $\sigma_Z$.  However, the induced homeomorphism $\Delta' \simeq \Delta$ does not necessarily map $\Delta'_i$ into $\Delta_i$ for all $i$; if $D(i)$ contains some but not all of the components of $D(s)$ that contain $Z$, then $\Delta'_i$ is obtained from $\Delta_i$ by gluing on the cone over the face $\sigma_V$ corresponding to the smallest stratum of $D(i)$ that contains $Z$ and then attaching cones over faces that contain $\sigma_V$.  In particular, $\Delta'_i$ may not be homeomorphic to $\Delta_i$.  Nevertheless, we give a compatible homotopy equivalences $\Delta'_i \simeq \Delta_i$ for all $i$, as follows.

Choose $i$ as small as possible so that $D(i)$ contains $Z$, and let $D_j$ be a component of $D(i)$ that contains $Z$.  Then there is a piecewise-linear homotopy equivalence from $\Delta'$ to $\Delta$ taking the vertex $v_E$ corresponding to the exceptional divisor to the vertex $v_j$ corresponding to $D_j$, and preserving the vertices corresponding to all other $D_k$.  The homotopy equivalence corresponds to the following map $p$ from the face poset of $\Delta'$ to the face poset of $\Delta$.  For any face $\sigma$ in $\Delta'$ that contains $E$, $p(\sigma)$ is the unique face of $\Delta$ spanned by the vertices of $\sigma$ other than $v_E$, together with $v_j$; if $\sigma$ does not contain $v_E$, then $p(\sigma) = \sigma$.  This homotopy equivalence is induced by the discrete Morse flow, in the sense of \cite{Forman98}, that pairs each face $\sigma$ that contains $v_E$ but not $v_j$ into the unique face spanned by $\sigma$ and $v_j$. 

\begin{picture}(420,150)(0,0)
\put(320,0){\makebox(0,0)[b]{ \scalebox{.7}{ \includegraphics{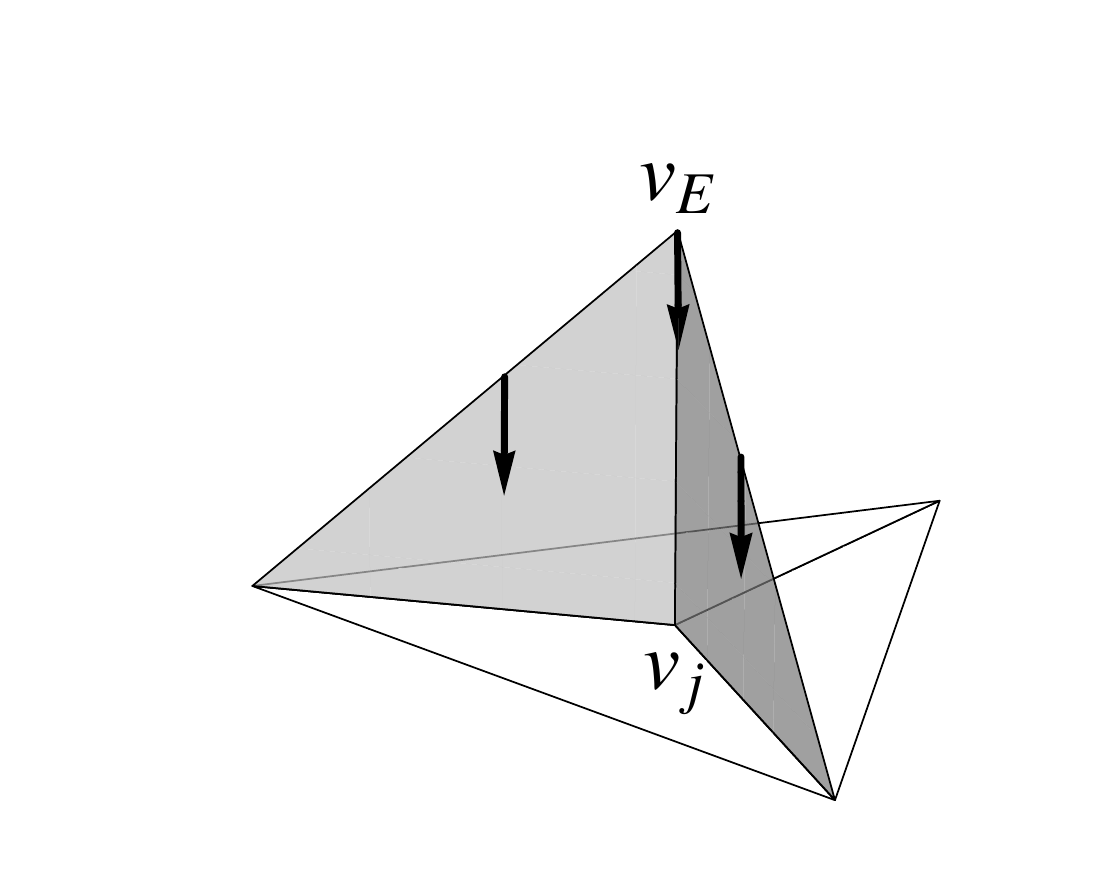}}}}
\put(90,-20){\makebox(0,0)[b]{ \scalebox{.5}{ \includegraphics{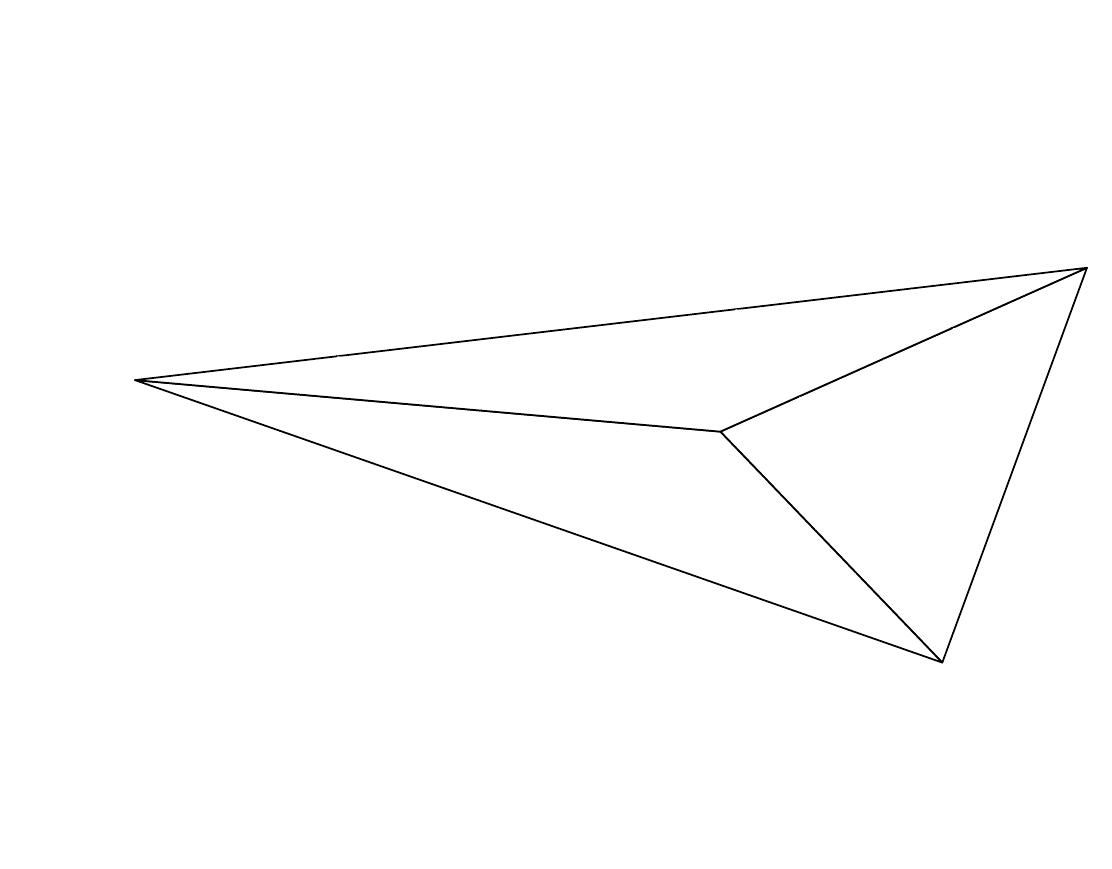}}}}
\put(90,0){\makebox(0,0){$\Delta$}}
\put(320,0){\makebox(0,0){$\Delta'$}}
\end{picture}

\bigskip

\noindent This discrete Morse flow gives compatible homotopy equivalences between $\Delta'_i$ and $\Delta_i$, for all $i$.

\smallskip

Otherwise, $Z$ is properly contained in a component $V$ of the intersection of all components of $D$ that contain it.  By part (3) of Stepanov's Lemma, $\Delta'$ is constructed from $\Delta$ by gluing on the cone over $\sigma_V$ and then attaching copies of cones over faces that contain $\sigma_V$.  Again, we choose $i$ as small as possible so that $D(i)$ contains $Z$, with $D_j$ a  component of $D(i)$ that contains $Z$, and there are compatible homotopy equivalences from $\Delta'_i$ to $\Delta_i$ taking $v_E$ to $v_j$, as required.
\end{proof}

\begin{proof}[Proof of Theorem~\ref{pairs}]
Suppose $X_1^+$ and $X_2^+$ are two such compactifications of weak log resolutions.  By Lemma~\ref{enlarge discriminant}, we may assume there is a closed algebraic subset $V \subset X$ such that $\pi_1: \widetilde X \rightarrow X$ and $\pi_2: \widetilde X_2 \rightarrow X$ are isomorphisms over $X \smallsetminus V$, and the preimages of $V$, as well as $\pi_1^{-1}(V) \cup \partial X_1^+$ and $\pi_2^{-1}(V) \cup \partial X_2^+$ are divisors with simple normal crossings.  We now compare the filtered complexes corresponding to these compactifications of resolutions using weak factorization.

By toroidal weak factorization of birational maps, there exist smooth compactifications $Z_0, \ldots, Z_t$ of $X \smallsetminus V$ in which the complement of $X \smallsetminus V$ is a divisor with simple normal crossings and a sequence of birational maps
\[
X_1^+ = Z_0 \stackrel{\phi_1}{\dashrightarrow} Z_1 \stackrel{\phi_2}{\dashrightarrow} \cdots  \stackrel{\phi_t}{\dashrightarrow} Z_t = X_2^+
\]
such that
\begin{enumerate}
\item Each $\phi_j$ is the identity on $X \smallsetminus V$;
\item Either $\phi_j$ or $\phi_j^{-1}$ is the blowup along a smooth variety that has simple normal crossings with the complement of $X \smallsetminus V$;
\item There is an index $j_0$ such that the birational map $Z_j \dashrightarrow X_1^+$ is a projective morphism for $j \leq j_0$ and $Z_j \dashrightarrow X_2^+$ is a projective morphism for $j \geq j_0$.
\end{enumerate}
In particular, each $Z_j$ contains an open subset $U_j$ that maps projectively onto $X$.

Now, let $E_i(j)$ be the preimage of $Y_i$ in $U_j$, and let $F_i(j)$ be $E_i(j) \cup (Z_j \smallsetminus U_j)$.  For each $j$ we apply Stepanov's Lemma to either $\phi_j$ or $\phi_{j}^{-1}$, whichever is a blowup.  As in the proof of Lemma~\ref{enlarge discriminant}, there is a discrete Morse flow connecting $\Delta(F_s(j-1))$ to $\Delta(F_s(j))$ that takes the vertex corresponding to the exceptional divisor to a carefully chosen vertex corresponding to an irreducible divisor that contains the center of the blowup, inducing a simple homotopy equivalence of filtered complexes between
\[
\Delta(E_1(j-1)) \subset \cdots \subset \Delta(E_r(j-1)) \subset \Delta(F_{r+1}(j-1)) \subset \cdots \subset \Delta(F_s(j-1))
\]
and
\[
\Delta(E_1(j)) \subset \cdots \subset \Delta(E_r(j)) \subset \Delta(F_{r+1}(j)) \subset \cdots \subset \Delta(F_s(j)).
\]
The theorem follows, by composing these simple homotopy equivalences.
\end{proof}

\section{Boundary complexes of affine varieties}  \label{affine section}

Suppose $X$ is affine, $\pi: \widetilde X \rightarrow X$ is a log resolution, and $X^+$ is a log compactification of $\widetilde X$ with respect to the exceptional divisor $E$ of $\pi$.  As noted in the introduction, it follows from Theorem~\ref{boundary homology} and theorems of Andreotti and Frankel, and of Kar{\v c}jauskas that the boundary complex $\Delta(\partial X^+)$ has the rational homology of a wedge sum of spheres of dimension $n-1$.  If $n$ is one, then the boundary complex is just a finite set of points, and if $n$ is two then the boundary complex is a connected graph, and hence has the homotopy type of a wedge sum of circles.  In this section, we consider the homotopy types of boundary complexes of affine varieties of dimension at least three. 

\begin{remark}
Suppose $n$ is at least three.  By theorems of Whitehead and Hurewicz, a regular CW complex has the homotopy type of a wedge sum of spheres of dimension $n-1$ if and only if it is simply connected and has the integral homology of a wedge sum of spheres of dimension $n-1$.  Therefore, $\Delta(\partial X^+)$ has the homotopy type of a wedge sum of $n-1$-dimensional spheres if and only if it is simply connected and its homology is torsion free.
\end{remark}

 The following lemma is helpful for studying boundary complexes of products.  For topological spaces $\Delta_1$ and $\Delta_2$, we write $\Delta_1 * \Delta_2$ for the join
\[
\Delta_1 * \Delta_2 = \big( \Delta_1 \times \Delta_2 \times [0,1] \big) / \sim,
\]
where $\sim$ is the equivalence relation generated by $(x, y, 0) \sim (x, y', 0)$ and $(x,y,1) \sim (x',y,1)$.  See \cite[p.~18]{Hatcher02} for further details on joins.

\begin{lemma}  \label{join}
Let $X_1$ and $X_2$ be smooth varieties, with log compactifications $X_1^+$ and $X_2^+$, respectively.  Then the product $X_1^+ \times X_2^+$ is a log compactification of $X_1 \times X_2$, and there is a natural homeomorphism
\[
\Delta\big( \partial (X_1^+ \times X_2^+)\big) \cong \Delta(\partial X_1^+) * \Delta(\partial X_2^+).
\]
\end{lemma}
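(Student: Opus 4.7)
The plan is to prove the lemma by matching up the combinatorics of the boundary divisor of the product with the simplicial structure of the join.

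First, I would verify that $X_1^+ \times X_2^+$ is a log compactification of $X_1 \times X_2$. Working in local coordinates, if $\partial X_1^+$ is locally cut out by $x_1 \cdots x_a = 0$ and $\partial X_2^+$ by $y_1 \cdots y_b = 0$, then on the product
\[
\partial(X_1^+ \times X_2^+) = (\partial X_1^+ \times X_2^+) \cup (X_1^+ \times \partial X_2^+)
\]
is locally cut out by $x_1 \cdots x_a y_1 \cdots y_b = 0$, which is a divisor with simple normal crossings. In particular, the irreducible components of the product boundary are exactly the divisors $D_i^{(1)} \times X_2^+$ and $X_1^+ \times D_j^{(2)}$, as $D_i^{(1)}$ and $D_j^{(2)}$ range over the irreducible components of $\partial X_1^+$ and $\partial X_2^+$, respectively. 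Thus the vertex set of $\Delta(\partial(X_1^+ \times X_2^+))$ is the disjoint union of the vertex sets of $\Delta(\partial X_1^+)$ and $\Delta(\partial X_2^+)$.

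Next, I would identify higher-dimensional simplices. Any intersection of components of $\partial(X_1^+ \times X_2^+)$ has the form
\[
D_I^{(1)} \times D_J^{(2)} = \Bigl( \bigcap_{i \in I} D_i^{(1)} \Bigr) \times \Bigl( \bigcap_{j \in J} D_j^{(2)} \Bigr),
\]
with $I \cup J$ nonempty, where by convention $D_\emptyset^{(k)} = X_k^+$. Since we are working over $\C$, a product of irreducible varieties is irreducible, so the irreducible components of $D_I^{(1)} \times D_J^{(2)}$ are exactly the products $Y_1 \times Y_2$, where $Y_k$ is an irreducible component of $D_I^{(1)}$ or $D_J^{(2)}$ (and $Y_k = X_k^+$ when the corresponding index set is empty). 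This yields a bijection between the nonempty simplices of $\Delta(\partial(X_1^+ \times X_2^+))$ and pairs $(\sigma_1, \sigma_2)$ of simplices $\sigma_k \subset \Delta(\partial X_k^+)$, at most one of which is empty; the simplex corresponding to $(\sigma_1, \sigma_2)$ has vertex set equal to the disjoint union of the vertex sets of $\sigma_1$ and $\sigma_2$.

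Finally, I would check that this bijection respects the face relation: the stratum $Y_1 \times Y_2$ is contained in $Z_1 \times Z_2$ if and only if $Y_k \subseteq Z_k$ for $k = 1, 2$, so the face poset of $\Delta(\partial(X_1^+ \times X_2^+))$ is the product of the face posets of $\Delta(\partial X_1^+)$ and $\Delta(\partial X_2^+)$ (augmented by an empty face in each factor). This is precisely the face poset of the simplicial join $\Delta(\partial X_1^+) * \Delta(\partial X_2^+)$. Since the dual complexes are regular CW complexes and are homeomorphic to the geometric realizations of their face posets, the two complexes are naturally homeomorphic.

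The proof is essentially combinatorial bookkeeping; the one place that requires a bit of care is the identification of irreducible components of a product with products of irreducible components, which I would cite as standard over an algebraically closed base field. Everything else is unwinding the definition of the join and comparing face relations.
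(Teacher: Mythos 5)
Your proof is correct and follows essentially the same route as the paper's: identify the irreducible components and strata of $\partial(X_1^+\times X_2^+)$ with pairs of strata from the two factors, and match the resulting simplices with joins $\sigma_{Y_1}*\sigma_{Y_2}$. The only cosmetic difference is that you assemble the final homeomorphism via face posets of regular CW complexes, while the paper directly observes that the join of two $\Delta$-complexes is the union of the joins of their simplices; both are valid.
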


\begin{proof}
The product $X_1^+ \times X_2^+$ is smooth, and its boundary is the divisor
\[
\partial(X_1^+ \times X_2^+) = \big(\partial X_1^+ \times X_2^+\big) \cup \big(X_1^+ \times \partial X_2^+\big),
\]
which has simple normal crossings, by a suitable choice of coordinates on each factor.  Say $D_{11}, \ldots, D_{r1}$ and $D_{12}, \ldots, D_{s2}$ are the irreducible components of $\partial X_1^+$ and $\partial X_2^+$, respectively.  For any subsets of indices $I \subset \{1, \ldots, r\}$ and $J \subset \{1, \ldots, s \}$, let $D_{IJ}$ be the corresponding intersection
\[
D_{IJ} = \bigg(\bigcap_{i \in I} D_{i1}\bigg) \cap \bigg(\bigcap_{j \in J} D_{j2}\bigg).
\]
The irreducible components of $D_{IJ}$ are exactly the subvarieties $Y \times Z$ where $Y$ and $Z$ are irreducible components of $D_I \times X_2^+$ and $X_1^+ \times D_J$, respectively.  The corresponding simplex $\sigma_{Y \times Z}$ is naturally identified with the join
\[
\sigma_{Y \times Z} \cong \sigma_Y * \sigma_Z.
\]
Since the join of two $\Delta$-complexes is the union of the joins of their respective simplices with the natural identifications, the lemma follows.
\end{proof}

\begin{proposition}
If $\Delta(\partial X_1^+)$ and $\Delta(\partial X_2^+)$ are homotopy equivalent to wedge sums of $m_1$ spheres of dimension $n_1$, and $m_2$ spheres of dimension $n_2$, respectively, then the boundary complex of the product $\Delta\big(\partial X_1^+ \times X_2^+) \big)$ is homotopy equivalent to a wedge sum of $m_1 \cdot m_2$ spheres of dimension $n_1 + n_2 + 1$.
\end{proposition}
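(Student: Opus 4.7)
The plan is to apply Lemma~\ref{join} to identify the boundary complex with a join, and then compute the homotopy type of the join of wedges of spheres using standard tools from algebraic topology.

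First, by Lemma~\ref{join}, there is a natural homeomorphism
\[
\Delta\big(\partial(X_1^+ \times X_2^+)\big) \cong \Delta(\partial X_1^+) * \Delta(\partial X_2^+).
\]
Since the join operation on CW complexes is homotopy invariant in each factor (homotopy equivalences $f_i : A_i \to B_i$ of CW complexes induce a homotopy equivalence $f_1 * f_2 : A_1 * A_2 \to B_1 * B_2$), the hypothesis gives
\[
\Delta\big(\partial(X_1^+ \times X_2^+)\big) \simeq \Big(\bigvee_{i=1}^{m_1} S^{n_1}\Big) * \Big(\bigvee_{j=1}^{m_2} S^{n_2}\Big).
\]

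Next, I would invoke the standard homotopy equivalence $A * B \simeq \Sigma(A \wedge B)$, valid for pointed CW complexes, together with the distributivity of the smash product over wedge sums, $(X \vee Y) \wedge Z \simeq (X \wedge Z) \vee (Y \wedge Z)$, and the identity $\Sigma(X \vee Y) \simeq \Sigma X \vee \Sigma Y$. Combining these, the join in question is homotopy equivalent to
\[
\Sigma\bigg(\Big(\bigvee_{i=1}^{m_1} S^{n_1}\Big) \wedge \Big(\bigvee_{j=1}^{m_2} S^{n_2}\Big)\bigg) \simeq \Sigma \bigg(\bigvee_{i,j} S^{n_1} \wedge S^{n_2} \bigg) \simeq \bigvee_{i=1}^{m_1}\bigvee_{j=1}^{m_2} \Sigma S^{n_1+n_2},
\]
using $S^{n_1} \wedge S^{n_2} \cong S^{n_1+n_2}$. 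Since $\Sigma S^{n_1+n_2} \cong S^{n_1+n_2+1}$, this gives a wedge of $m_1 m_2$ spheres of dimension $n_1+n_2+1$, as desired.

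There is no real obstacle here, only bookkeeping: the only point requiring mild care is that the homotopy equivalences $\bigvee^{m_\ell} S^{n_\ell} \simeq \Delta(\partial X_\ell^+)$ and the standard identities $A*B \simeq \Sigma(A\wedge B)$ and distributivity of smash over wedge all require pointed CW pairs, which is automatic since boundary complexes are finite regular CW complexes and we are free to choose a vertex as basepoint. Alternatively, one can avoid the smash-product formalism entirely by giving a direct cellular argument: the join of two wedges of spheres admits an evident CW structure in which the cells are products of cells, and the conclusion follows from $S^{n_1} * S^{n_2} \cong S^{n_1+n_2+1}$ together with the observation that joining with a fixed wedge distributes over wedge, up to homotopy.
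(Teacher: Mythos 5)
Your proof is correct, and the first step coincides with the paper's: both invoke Lemma~\ref{join} to identify $\Delta\big(\partial(X_1^+\times X_2^+)\big)$ with the join $\Delta(\partial X_1^+) * \Delta(\partial X_2^+)$ and then use homotopy invariance of the join to reduce to computing $\bigvee^{m_1} S^{n_1} * \bigvee^{m_2} S^{n_2}$. Where you diverge is in that computation. The paper stays elementary: it observes that wedge sums commute with pushouts, so $(\Delta_1\vee\Delta_2)*\Delta_3$ is the union of $\Delta_1*\Delta_3$ and $\Delta_2*\Delta_3$ glued along the cone $\{\mathrm{pt}\}*\Delta_3$, which is contractible; hence join distributes over wedge up to homotopy, and an induction on $m_1$ and $m_2$ together with $S^{n_1}*S^{n_2}\cong S^{n_1+n_2+1}$ finishes the argument. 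You instead route everything through the standard identity $A*B\simeq\Sigma(A\wedge B)$ and distributivity of smash over wedge. Both are valid; the smash-product route is shorter once the machinery is granted, but it imports the well-pointedness/reduced-suspension caveats you correctly flag (harmless here, since these are finite regular CW complexes with a $0$-cell as basepoint), whereas the paper's pushout argument is self-contained and makes the geometric reason for distributivity visible. Your closing remark about a direct cellular argument is essentially the paper's proof in disguise.
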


\begin{proof}
By Lemma~\ref{join}, $\Delta\big(\partial (X_1^+ \times X_2^+) \big)$ is naturally homeomorphic to the join $\Delta(\partial X_1^+) * \Delta(\partial X_2^+)$.  Since joins commute with homotopy equivalences, it will suffice to give a homotopy equivalence
\[
\vee^{m_1} S^{n_1} * \vee^{m_2} S^{n_2} \simeq \vee^{(m_1 \cdot m_2)} S^{n_1 + n_2 + 1},
\]
where $S^n$ denotes the sphere of dimension $n$.  Now wedge sums commute with pushouts, so for any topological spaces $\Delta_1$, $\Delta_2$, and $\Delta_3$, the join $(\Delta_1 \vee \Delta_2) * \Delta_3$ is naturally homeomorphic to the union of $\Delta_1 * \Delta_3$ and $\Delta_2 * \Delta_3$, glued along the join of a point with $\Delta_3$.  Since the join of a point with a complex is a cone over that complex, and hence contractible, it follows that there is a homotopy equivalence
\[
(\Delta_1 \vee \Delta_2) * \Delta_3 \simeq (\Delta_1 * \Delta_3) \vee (\Delta_2 * \Delta_3).
\]
The result then follows by induction on $m_1$ and $m_2$, since the join of spheres $S^{n_1} * S^{n_2}$ is homeomorphic to the sphere $S^{n_1 + n_2 + 1}$.
\end{proof}

We have seen that boundary complexes of products of curves and surfaces are homotopy equivalent to wedge sums of spheres.  Now we prove the same for complements of hyperplane arrangements and general complete intersections in the dense torus of a projective toric variety.  We begin with two combinatorial lemmas that are helpful for identifying CW complexes that are homotopy equivalent to a wedge sum of spheres.

\begin{lemma} \label{n skeleton}
Let $\Delta$ be the $n$-skeleton of a contractible CW complex.  Then $\Delta$ is homotopy equivalent to a wedge sum of spheres of dimension $n$.
\end{lemma}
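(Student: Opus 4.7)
The plan is to show that $\Delta$ is $(n-1)$-connected with $H_n(\Delta)$ free abelian, and then invoke Hurewicz and Whitehead to produce a homotopy equivalence from an appropriate wedge of $n$-spheres. Let $Y$ denote the contractible CW complex with $\Delta = Y^{(n)}$.

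First I would apply cellular approximation to the inclusion $\Delta \hookrightarrow Y$: from the long exact sequence of the CW pair $(Y, \Delta)$, together with the fact that $Y$ is obtained from $\Delta$ by attaching cells of dimension $\geq n+1$, we get $\pi_k(\Delta) \cong \pi_k(Y) = 0$ for $k < n$, so $\Delta$ is $(n-1)$-connected. Second, because $\Delta$ has dimension $n$, its cellular chain complex is concentrated in degrees $\leq n$, so $H_k(\Delta) = 0$ for $k > n$, and $H_n(\Delta)$, being the kernel of a map of free abelian groups, is itself free abelian; choose a basis indexed by a set $A$.

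For $n \geq 2$, the space $\Delta$ is simply connected, so the Hurewicz theorem gives $\pi_n(\Delta) \cong H_n(\Delta)$. Lifting the chosen basis to elements of $\pi_n(\Delta)$ and assembling the corresponding maps $S^n \to \Delta$ produces a map $f \colon \bigvee_{\alpha \in A} S^n \to \Delta$ inducing an isomorphism on $H_n$, and hence on all integral homology. Since both the domain and codomain are simply connected CW complexes, Whitehead's theorem implies that $f$ is a homotopy equivalence.

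The small cases $n = 0$ and $n = 1$ I would dispatch separately: a discrete set of points is a wedge of copies of $S^0$, and a connected graph (which $\Delta$ must be, by $\pi_0(\Delta) = 0$) collapses along any spanning tree to a wedge of circles. The main obstacle is the $(n-1)$-connectedness, which rests on cellular approximation for the pair $(Y, Y^{(n)})$; once that is established, freeness of the top homology of an $n$-dimensional CW complex together with the Hurewicz--Whitehead package finishes the argument.
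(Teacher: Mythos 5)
Your proof is correct and follows essentially the same route as the paper's: establish that $\Delta$ is simply connected with homology that is free and concentrated in degree $n$ (using that attaching cells of dimension $\geq n+1$ does not change low-dimensional homotopy/homology, and that top homology of an $n$-dimensional complex is a subgroup of a free abelian group), then conclude via the Hurewicz and Whitehead theorems. Your write-up is somewhat more detailed than the paper's (explicitly constructing the comparison map from a wedge of spheres and treating $n=0,1$ separately), but the substance is identical.
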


\begin{proof}
If $n$ is zero or one then the lemma is clear.  Assume $n$ is at least two, and say $\Delta'$ is a contractible CW complex with $n$-skeleton $\Delta$.  Then, since $\Delta$ and $\Delta'$ have the same $2$-skeleta they must have isomorphic fundamental groups, so $\Delta$ is simply connected.  Similarly, the integral homology $H_k(\Delta, \Z)$ vanishes for $k < n$, and $H_n(\Delta, \Z)$ is free, since $\Delta$ is $n$-dimensional.  Therefore, $\Delta$ is simply connected and has the integral homology of a wedge sum of spheres of dimension $n$, and the lemma follows by the Whitehead and Hurewicz theorems.
\end{proof}

If $\Delta$ is a CW complex and $\sigma$ is a maximal cell in $\Delta$, then we say that the $d$-fold \emph{puckering} of $\Delta$ along $\sigma$ is the CW complex obtained from $\Delta$ by attaching $d-1$ new cells of dimension $\dim \sigma$ along the attaching map for $\sigma$; i.e. if $\Delta$ is a simplicial complex then we glue on $d-1$ new copies of $\sigma$ by identifying all of their boundaries with the boundary of $\sigma$.  See \cite[Section~2]{tvbs-jag} for further details on puckering operations and their significance in toric geometry.

\begin{lemma} \label{puckering}
Let $\Delta$ be a regular CW complex that is homotopy equivalent to a wedge sum of spheres of dimension $n$, and let $\cP$ be the $d$-fold puckering of a maximal $n$-dimensional cell in $\Delta$.  Then $\cP$ is homotopy equivalent to a wedge sum of spheres of dimension $n$.
\end{lemma}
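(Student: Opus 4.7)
The plan is to realize $\cP$ as $\Delta$ with $d-1$ additional $n$-cells attached along the attaching map of $\sigma$, and then deduce the homotopy type from Whitehead's theorem together with the Hurewicz theorem. Let $f\colon S^{n-1}\to \Delta^{(n-1)}$ denote the attaching map of $\sigma$. Because $\sigma$ is a maximal cell, the puckering $\cP$ is obtained from $\Delta$ by attaching $d-1$ new $n$-cells $\sigma_1,\ldots,\sigma_{d-1}$, each via the same map $f$. In what follows I assume $n\geq 2$; the cases $n=0$ (a disjoint union of points) and $n=1$ (a connected graph obtained by adding $d-1$ parallel edges) are immediate since any finite discrete set is a wedge of $0$-spheres and any connected finite graph is homotopy equivalent to a wedge of circles.

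First I would identify the quotient $\cP/\Delta$. Since the attaching sphere of each new cell is contained in $\Delta$, collapsing $\Delta$ collapses the boundary of each $\sigma_i$ to a single point, and the pair $(\cP,\Delta)$ is a CW pair (hence a good pair). Therefore
\[
\cP/\Delta \;\cong\; \bigvee_{i=1}^{d-1} S^n.
\]

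Next I would read off the homology of $\cP$ from the long exact sequence of the pair
\[
\cdots \to H_{n+1}(\cP,\Delta) \to H_n(\Delta) \to H_n(\cP) \to H_n(\cP,\Delta) \to H_{n-1}(\Delta) \to \cdots,
\]
using $H_k(\cP,\Delta)\cong \widetilde H_k(\cP/\Delta)$, which equals $\Z^{d-1}$ for $k=n$ and $0$ otherwise. Since $\Delta$ is homotopy equivalent to a wedge of $n$-spheres with $n\geq 2$, $H_{n-1}(\Delta;\Z)=0$ and $H_k(\Delta;\Z)=0$ for $0<k<n$. The long exact sequence then gives $H_k(\cP;\Z)=0$ for $0<k<n$, $H_0(\cP;\Z)=\Z$ (as $\cP$ is connected), and a short exact sequence
\[
0 \to H_n(\Delta;\Z) \to H_n(\cP;\Z) \to \Z^{d-1} \to 0
\]
of free abelian groups, so $H_n(\cP;\Z)$ is free of rank $m+d-1$ if $H_n(\Delta;\Z)$ has rank $m$.

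Then I would check that $\cP$ is simply connected: the new cells have dimension $n\geq 2$, so their attachment does not change the fundamental group, and $\pi_1(\Delta)=0$ by hypothesis. By the Hurewicz theorem the map $\pi_n(\cP)\to H_n(\cP;\Z)$ is surjective, so I can choose maps $S^n\to\cP$ representing a basis of $H_n(\cP;\Z)$, and assemble them into a map $\bigvee^{m+d-1} S^n \to \cP$. This induces isomorphisms on all integral homology groups, and both source and target are simply connected CW complexes, so by Whitehead's theorem it is a homotopy equivalence. There is no real obstacle in this argument; the only mildly delicate points are identifying $\cP/\Delta$ correctly and separating out the low-dimensional cases $n=0,1$.
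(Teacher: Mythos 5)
Your proof is correct, but it takes a genuinely different and considerably longer route than the paper. The paper's proof is a one-liner: the attaching map of each new $n$-cell is null-homotopic in $\Delta$, because the attaching sphere bounds the disk $\sigma$, which already sits inside $\Delta$; attaching a cell along a null-homotopic map produces, up to homotopy, a wedge summand $S^n$, so $\cP \simeq \Delta \vee \bigvee^{d-1} S^n$ directly. That argument needs no case split beyond $n=0$, no Hurewicz or Whitehead, and in fact proves the stronger statement $\cP \simeq \Delta \vee \bigvee^{d-1} S^n$ for an \emph{arbitrary} regular CW complex $\Delta$ --- the hypothesis that $\Delta$ is a wedge of $n$-spheres is only used at the very end. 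Your argument instead computes $H_*(\cP)$ from the pair sequence, checks simple connectivity, and invokes Hurewicz and Whitehead; it is valid, but it uses the wedge-of-spheres hypothesis essentially (to get $H_{n-1}(\Delta)=0$ and $\pi_1(\Delta)=0$) and forces the separate treatment of $n=0,1$. One small imprecision: for $n=2$, attaching $2$-cells \emph{can} change the fundamental group in general; the correct statement is that $\pi_1(\cP)$ is a quotient of $\pi_1(\Delta)$ by van Kampen, hence trivial because $\pi_1(\Delta)$ is. With that phrasing fixed, everything you wrote goes through.
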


\begin{proof}
If $n$ is zero then the lemma is trivial.  Otherwise, the attaching map for each of the new $n$-cells is null homotopic, since it can be contracted to a point in $\sigma$.  Therefore, $\cP$ is homotopy equivalent to the wedge sum of $\Delta$ with $d-1$ copies of the $n$ sphere. 
\end{proof}

\bigskip

Let $Y$ be a projective toric variety, and let $V_1, \ldots, V_k$ be ample locally principal hypersurfaces in $Y$, where $V_i$ is cut out by a section $s_i$ of an ample line bundle $L_i$.  Then the system $\{ V_1, \ldots, V_k \}$ is \emph{nondegenerate with respect to Newton polytopes} in the sense of Khovanskii \cite{Khovanskii77} if, for every torus orbit $O_\sigma$ in $Y$, the restricted sections $\{s_i|_{O_\sigma}\}$ cut out a smooth subvariety of codimension $k$ in $O_\sigma$.

\begin{theorem}
Let $Y$ be a projective toric variety with dense torus $T$, and let $\{ V_1, \ldots, V_k \}$ be a system of ample hypersurfaces in $Y$ that is nondegenerate with respect to Newton polytopes.  Let $X$ be the intersection in the dense torus
\[
 X \ = \ (V_1 \cap \cdots \cap V_k) \cap T.
\]
Then the boundary complex of any log compactification of $X$ is homotopy equivalent to a wedge sum of spheres of dimension $\dim X-1$.
\end{theorem}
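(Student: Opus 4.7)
The plan is to exhibit one explicit smooth log compactification of $X$, read off its boundary complex from the fan of a suitable toric refinement, and then apply Lemmas~\ref{n skeleton} and \ref{puckering} to conclude that this complex is homotopy equivalent to a wedge of $(d-1)$-spheres, where $d = n - k = \dim X$.

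I would begin by choosing a smooth projective toric refinement $Y' \to Y$ whose fan $\Sigma'$ is sufficiently refined relative to the normal fans of the Newton polytopes of $V_1, \ldots, V_k$, and I would let $\overline{X}'$ be the closure of $X$ in $Y'$, equivalently the strict transform of $\overline{X} = V_1 \cap \cdots \cap V_k$. Nondegeneracy of $\{V_1, \ldots, V_k\}$ implies, after possibly a further refinement, that $\overline{X}'$ is smooth, meets every torus orbit $O_\sigma$ transversely in a smooth subvariety of codimension $k$ in $O_\sigma$, and that $\partial \overline{X}' = \overline{X}' \setminus X$ is a divisor with simple normal crossings. By Theorem~\ref{only boundary}, it then suffices to compute $\Delta(\partial \overline{X}')$ for this single compactification.

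Next, I would identify $\Delta(\partial \overline{X}')$ combinatorially in terms of $\Sigma'$. The link of the vertex in $\Sigma'$ is a triangulated sphere $S^{n-1}$. For $\sigma \in \Sigma'$ of dimension $j$, the intersection $\overline{X}' \cap D_\sigma$ has expected dimension $d - j$, and is empty for $j > d$. For $j \leq d$, the ampleness of the $V_i$ (so that their restrictions to each $D_\sigma$ are ample) combined with a Fulton-Hansen-style connectedness theorem should yield that $\overline{X}' \cap D_\sigma$ is nonempty and connected when $j < d$, hence smooth and irreducible by nondegeneracy, while for $j = d$ it is a nonempty finite set of reduced points. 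Therefore $\Delta(\partial \overline{X}')$ agrees with the $(d-1)$-skeleton of the link sphere after a multifold puckering at each top-dimensional face, with puckering factor $\#(\overline{X}' \cap D_\tau)$ on the face corresponding to the $d$-cone $\tau$.

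Finally, $S^{n-1}$ with its triangulation from $\Sigma'$ is the $(n-1)$-skeleton of an $n$-ball endowed with the CW structure that attaches one additional $n$-cell along the identity, and this $n$-ball is contractible, so the $(d-1)$-skeleton of $S^{n-1}$ equals the $(d-1)$-skeleton of this contractible complex. By Lemma~\ref{n skeleton}, it is therefore homotopy equivalent to a wedge of $(d-1)$-spheres, and by Lemma~\ref{puckering}, applied once per top-dimensional face with puckering factor exceeding one, the puckered version has the same homotopy type, proving the theorem. The main obstacle is the combinatorial identification in the preceding paragraph, in particular the nonemptiness of $\overline{X}' \cap D_\sigma$ for every $\sigma \in \Sigma'$ of dimension at most $d$: ampleness of the $V_i$ in $Y$ directly implies nonemptiness of analogous intersections with orbits of $Y$, but one must choose $Y' \to Y$ compatibly so that this nonemptiness, together with irreducibility and reducedness of the strata, persists on every orbit of the refined fan, which is precisely why $\Sigma'$ must be chosen to refine the normal fans of the Newton polytopes.
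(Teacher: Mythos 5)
Your overall strategy matches the paper's: reduce to a single toric compactification via Theorem~\ref{only boundary}, read the boundary complex off a fan, and finish with Lemmas~\ref{n skeleton} and~\ref{puckering}. But the combinatorial identification of $\Delta(\partial \overline{X}')$ in your third paragraph is wrong, and the gap is exactly at the point you flag as ``the main obstacle.'' The strata of $\overline{X}'$ over cones of the \emph{refined} fan $\Sigma'$ are not all connected in dimension $j<d$: if $\tau\in\Sigma'$ has relative interior contained in the relative interior of a $d$-dimensional cone $\sigma$ of the \emph{original} fan $\Sigma$, then $V(\tau)$ maps onto the $k$-dimensional orbit closure $V(\sigma)\subset Y$, which meets $\overline{X}$ in $d(\sigma)=(V_1\cdots V_k\cdot V(\sigma))$ distinct points; hence $\overline{X}'\cap V(\tau)$ is a disjoint union of $d(\sigma)$ pieces even when it is positive-dimensional. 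Consequently the boundary complex is not ``the $(d-1)$-skeleton of the $\Sigma'$-link puckered only along top faces'': every face of the $\Sigma'$-subdivision interior to $\sigma$ must appear $d(\sigma)$ times. The correct description is the $\Sigma'$-induced subdivision of the complex obtained by puckering each maximal face of the link of the $d$-skeleton of $\Sigma$ by $d(\sigma)$ --- a genuinely different complex from yours, though it too is a wedge of spheres by the same two lemmas.

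Relatedly, your proposed fix --- establishing nonemptiness, connectedness and reducedness of the strata directly on $Y'$ via ampleness and Fulton--Hansen --- cannot work as stated, because the pullbacks (or strict transforms) of the ample divisors $V_i$ to the refinement $Y'$ are no longer ample, only nef; no choice of refinement restores ampleness on the exceptional orbits. The paper sidesteps this by doing all positivity arguments on $Y$ itself, where ampleness holds: positivity of the intersection numbers $d(\sigma)$ gives nonemptiness and the exact point count on the $d$-dimensional cones of $\Sigma$, Khovanskii's nondegeneracy gives smoothness and the dimension/connectedness statements orbit by orbit, and the strata in $Y'$ are then read off by pushing forward along the proper toric morphism $Y'\to Y$ rather than by a connectedness theorem applied upstairs. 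If you reorganize your argument so that the counting happens on $\Sigma$ and the refinement $\Sigma'$ only subdivides, the rest of your outline goes through.
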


\begin{proof}
Let $n$ be the dimension of $X$.  The theorem is clear if $n$ is at most two, so assume $n \geq 3$.  By Theorem~\ref{only boundary}, the homotopy type of the boundary complex does not depend on the choice of log compactification.  Let $Y' \rightarrow Y$ be a toric resolution of singularities.  The closure $X^+$ of $X$ in $Y'$ is smooth and transverse to $Y' \smallsetminus T$, by \cite[Theorem~2]{Khovanskii77}, so it will suffice to show that the boundary complex $\Delta(\partial X^+)$ is homotopy equivalent to a wedge sum of spheres of dimension $\dim X - 1$;  We will prove this by comparing $\Delta(\partial X^+)$ to the fans corresponding to $Y$ and $Y'$.

Let $\Sigma$ and $\Sigma'$ be the fans corresponding to $Y$ and $Y'$, respectively.  Let $\Sigma(n)$ be the link of the vertex in the $n$-skeleton of $\Sigma$, which is homotopy equivalent to a wedge sum of $n-1$-dimensional spheres by Lemma~\ref{n skeleton}.  For each $n$-dimensional cone $\sigma \in \Sigma$, let $d(\sigma)$ be the intersection number
\[
d(\sigma) = (V_1 \cdots V_k \cdot V(\sigma)),
\]
which is a positive integer since the $V_i$ are ample and locally principal.  Let $\cP$ be the regular CW complex obtained by $d(\sigma)$-fold puckering of the maximal face corresponding to $\sigma$ in $\Sigma(n)$.  Then $\cP$ is homotopy equivalent to a wedge sum of spheres of dimension $n-1$, by Lemma~\ref{puckering}.  We claim that the boundary complex $\Delta(\partial X^+)$ is naturally homeomorphic to the subdivision
\[
\cP' \cong \cP \times _{\Sigma_{n}} \Sigma'_n
\]
of $\cP$ induced by the subdivision $\Sigma'$ of $\Sigma$, and hence also homeomorphic to a wedge sum of spheres of dimension $n-1$, as required.

Say $\tau$ is a cone in $\Sigma'$.  Then the intersection of $X^+$ with the corresponding $T$-invariant subvariety $V(\tau)$ is either 
\begin{enumerate}
\item A smooth irreducible subvariety of codimension $\dim \tau$ if $\tau$ is contained in the $n-1$-skeleton of $\Sigma$.
\item A disjoint union of $d(\sigma)$ smooth irreducible subvarieties of codimension $\dim \tau$, corresponding to the $d(\sigma)$ distinct intersection points of $X^+$ with $V(\sigma)$, if the relative interior of $\tau$ is contained in the relative interior of an $n$-dimensional cone $\sigma \in \Sigma$, or
\item Empty, if $\tau$ is not contained in the $n$-skeleton of $\Sigma$.
\end{enumerate}
The inclusions of these subvarieties are the natural ones, respecting the labellings by points in $X^+ \cap V(\sigma)$, and it follows that $\Delta(\partial X^+)$ is naturally isomorphic to the subdivision $\cP'$ of $\cP$ induced by $\Sigma'$, as required.
\end{proof}

\begin{remark}
In the proof above, we used the topological fact that the link $\Sigma(n)$ of the vertex in the $n$-skeleton of the fan corresponding to a projective toric variety is homotopy equivalent to a wedge sum of spheres, because it is the $(n-1)$-skeleton of a contractible CW complex (Lemma~\ref{n skeleton}).  There are other combinatorial proofs that this link is homotopy equivalent to a wedge sum of spheres that may be more useful in other situations.  For instance, one could argue that $\Sigma(n)$ is the $(n-1)$-skeleton of the boundary complex of a polytope.  The boundary complex of a polytope is shellable \cite{BruggesserMani71}, and rank truncation preserves shellability \cite[Theorem~4.1]{Bjorner80}, so $\Sigma(n)$ is shellable.  Note also that the puckering operation preserves shellability, so the complex $\cP$ appearing in the proof above is even shellable.  It is perhaps also worth noting that Babson and Hersh have shown that any shellable pure complex has a discrete Morse flow with one critical vertex and all other critical cells in the top dimension \cite{BabsonHersh05}, so $\Sigma(n)$ and $\cP$ also have such a flow.
\end{remark}

\section{Weights from singularities}  \label{singularities section}

Here we consider $j$th graded pieces of the weight filtration for small $j$. The groups $\Gr_j^W H^k(X)$ have a particularly nice description when the singular locus of $X$ is smooth or, more generally, when $X$ has a resolution of singularities with smooth discriminant.  We will be especially interested in the case where $X$ has isolated singularities.

\begin{proposition} \label{resolution weights}
Let $\pi: \widetilde X \rightarrow X$ be a proper birational morphism from a smooth variety such that the discriminant $V$ is smooth, and let $E = \pi^{-1}(V)$ be the exceptional locus.  Then there are natural isomorphisms
\[
W_j H^k(X) \cong W_j H^{k-1}(E),
\]
for $j$ less than $k-1$, and
\[
\Gr_{k-1}^W H^k(X) \cong \coker \big[ H^{k-1}(\widetilde X) \oplus H^{k-1}(V) \rightarrow \Gr_{k-1}^W H^{k-1}(E) \big]
\]
\end{proposition}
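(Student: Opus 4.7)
The plan is to deduce both isomorphisms directly from the Mayer--Vietoris sequence of the mapping cylinder attached to $\pi$, using only the combinatorial restrictions on weights for smooth varieties recorded in Section~\ref{weight basics}.

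For the first claim, observe that since morphisms of mixed Hodge structures are strictly compatible with the weight filtration, the functor $W_j$ is exact on MHS. Applying it to the Mayer--Vietoris long exact sequence yields the exact fragment
\[
W_j H^{k-1}(\widetilde X) \oplus W_j H^{k-1}(V) \to W_j H^{k-1}(E) \to W_j H^k(X) \to W_j H^k(\widetilde X) \oplus W_j H^k(V).
\]
Because $\widetilde X$ and $V$ are smooth, their $m$th cohomology has weights in $\{m, \ldots, 2m\}$, so $W_j H^m(\widetilde X)$ and $W_j H^m(V)$ both vanish whenever $j < m$. For $j < k-1$ the two outer terms vanish, and the middle arrow becomes the desired isomorphism $W_j H^{k-1}(E) \cong W_j H^k(X)$.

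For the second claim, apply instead $\Gr_{k-1}^W$ to the same sequence; by the Mayer--Vietoris statement recalled in Section~\ref{weight basics} this is automatically exact. The terms $\Gr_{k-1}^W H^k(\widetilde X)$ and $\Gr_{k-1}^W H^k(V)$ vanish (weights in degree $k$ on a smooth variety are $\geq k$), so the sequence terminates as
\[
\Gr_{k-1}^W H^{k-1}(\widetilde X) \oplus \Gr_{k-1}^W H^{k-1}(V) \to \Gr_{k-1}^W H^{k-1}(E) \to \Gr_{k-1}^W H^k(X) \to 0.
\]
Since $W_{k-2} H^{k-1}(\widetilde X) = 0$ by smoothness, $\Gr_{k-1}^W H^{k-1}(\widetilde X)$ coincides with the subspace $W_{k-1} H^{k-1}(\widetilde X) \subseteq H^{k-1}(\widetilde X)$, and similarly for $V$. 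Strict compatibility of the pullback $H^{k-1}(\widetilde X) \oplus H^{k-1}(V) \to H^{k-1}(E)$ then guarantees that the image of the full cohomology in $\Gr_{k-1}^W H^{k-1}(E)$ agrees with the image of the weight $k-1$ subspace, identifying the two cokernels and yielding the stated expression.

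The argument is largely mechanical once the Mayer--Vietoris formalism is available; the only substantive point is the invocation of strict compatibility at the end, which is what permits writing $H^{k-1}(\widetilde X) \oplus H^{k-1}(V)$ in place of the graded piece $\Gr_{k-1}^W H^{k-1}(\widetilde X) \oplus \Gr_{k-1}^W H^{k-1}(V)$ in the cokernel describing $\Gr_{k-1}^W H^k(X)$.
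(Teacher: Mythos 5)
Your proof is correct and follows essentially the same route as the paper's: the Mayer--Vietoris sequence of the mapping cylinder for $\pi$, combined with the vanishing of the low-weight pieces of the cohomology of the smooth varieties $\widetilde X$ and $V$, which kills the outer terms of the relevant fragment. The paper's own proof is a one-line version of exactly this argument; your closing paragraph, justifying via strictness why the cokernel may be written with the full groups $H^{k-1}(\widetilde X)\oplus H^{k-1}(V)$ rather than their weight-$(k-1)$ graded pieces, supplies a detail the paper leaves implicit.
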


\begin{proof}
The proposition follows from the Mayer-Vietoris sequence 
\[
\cdots \rightarrow H^{k-1}(\widetilde X) \oplus H^{k-1}(V) \rightarrow H^{k-1}(E) \rightarrow H^k(X) \rightarrow H^k(\widetilde X) \oplus H^k(V) \rightarrow \cdots,
\]
since $H^k(\widetilde X)$ and $H^{k}(V)$ have weights in $\{k, \ldots, 2k\}$. 
\end{proof}

Putting together Propositions~\ref{compactification weights}, \ref{proper weights}, and \ref{resolution weights} gives a fairly complete description of the weight filtration on $H^*(X)$ when the singular locus is smooth and proper or, more generally, when $X$ has a resolution of singularities with smooth and proper discriminant, in which the low weight pieces come from the singularities, and the high weight pieces come from the boundary.

\begin{theorem} \label{smooth compact discriminant}
Let $\pi: \widetilde X \rightarrow X$ be a proper birational morphism from a smooth variety such that the discriminant $V$ is smooth and proper, and let $E$ be the exceptional locus.  Let $X^+$ be a smooth compactification of $\widetilde X$ with boundary $\partial X^+$.  Then the graded pieces of the weight filtration on $H^*(X)$ are given by
\[
\Gr_j^W H^k(X) \cong \left \{ \begin{array}{ll} \Gr_j^W H^{k-1}(E) & \mbox{ for } j \leq k-2. \\
									 \coker \big[ W_{k-1} H^{k-1}(\widetilde X) \oplus H^{k-1}(V) \rightarrow \Gr_{k-1}^W H^{k-1}(E) \big] & \mbox{ for } j = k-1. \\
									 \ker \big[ W_k H^k (\widetilde X) \oplus H^k(V) \rightarrow \Gr_k^W H^{k}(E) \big] & \mbox{ for } j = k. \\
									 \big(\coker \big[ H^{2n-k-1}(X^+) \rightarrow 
									\Gr_{2n-k-1}^W H^{2n-k-1}(\partial X^+) \big] \big)^\vee & \mbox{ for }j = k + 1. \\
									 \big( \Gr^W_{2n-j}H^{2n-k-1}(\partial X^+)\big)^\vee & \mbox{ for } j \geq k + 2. \end{array} \right.
\]
\end{theorem}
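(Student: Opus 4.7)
The plan is to assemble the theorem from Propositions~\ref{compactification weights}, \ref{proper weights}, and \ref{resolution weights}, together with Poincar\'e Duality, by splitting into three ranges of weights: $j\le k$, $j=k+1$, and $j\ge k+2$. Throughout, one exploits the fact that $\widetilde X$ smooth forces $W_{k-1}H^k(\widetilde X)=0$, that $V$ smooth and proper forces $H^k(V)$ to be pure of weight $k$, and that $E$ proper (hence with weights in $\{0,\dots,k\}$ on $H^k$) annihilates $\Gr_j^W H^k(E)$ for $j>k$.

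For the low range $j\le k-1$, I would apply the Mayer-Vietoris sequence of the resolution $\pi$ on graded pieces:
\[
\Gr_j^W H^{k-1}(\widetilde X)\oplus \Gr_j^W H^{k-1}(V) \to \Gr_j^W H^{k-1}(E)\to \Gr_j^W H^k(X)\to \Gr_j^W H^k(\widetilde X)\oplus \Gr_j^W H^k(V).
\]
Smoothness of $\widetilde X$ and purity of $H^\ast(V)$ kill the outer terms when $j\le k-1$: the right-hand term vanishes because $\widetilde X$ is smooth and $V$ is smooth and proper, while for $j\le k-2$ the left-hand term also vanishes, giving $\Gr_j^W H^k(X)\cong \Gr_j^W H^{k-1}(E)$ as in Proposition~\ref{resolution weights}. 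For $j=k-1$ the right-hand term still vanishes, but now the left-hand term has $\Gr_{k-1}^WH^{k-1}(\widetilde X)=W_{k-1}H^{k-1}(\widetilde X)$ and $\Gr_{k-1}^WH^{k-1}(V)=H^{k-1}(V)$, producing the cokernel description.

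For $j=k$, I would use the same sequence one step to the right. The term $\Gr_k^WH^{k-1}(E)$ vanishes because $E$ is proper of dimension $<k$ (weights bounded by $k-1$), so the map $\Gr_k^WH^k(X)\to \Gr_k^WH^k(\widetilde X)\oplus\Gr_k^WH^k(V)$ is injective, with image in the kernel of the restriction to $\Gr_k^WH^k(E)$. Smoothness of $\widetilde X$ and purity of $H^k(V)$ rewrite the target as $W_kH^k(\widetilde X)\oplus H^k(V)$, yielding the kernel formula.

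For the high range $j\ge k+1$, I would first invoke Proposition~\ref{proper weights}(2): since $V$ is proper and $j>k$, the morphism $\pi$ identifies $\Gr_j^WH^k(X)\cong \Gr_j^WH^k(\widetilde X)$. Poincar\'e Duality for the smooth variety $\widetilde X$ then converts this to $(\Gr_{2n-j}^WH^{2n-k}_c(\widetilde X))^\vee$. Finally, I would apply Proposition~\ref{compactification weights} to $\widetilde X\subset X^+$: if $j\ge k+2$ then $2n-j\le (2n-k)-2$, so $W_{2n-j}H^{2n-k}_c(\widetilde X)\cong W_{2n-j}H^{2n-k-1}(\partial X^+)$, and passing to graded pieces gives the last line of the theorem. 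The borderline case $j=k+1$, where $2n-j=2n-k-1$, is the only one requiring additional care: here $\Gr_{2n-k-1}^WH^{2n-k}_c(\widetilde X)$ is the cokernel in the pair sequence of $(X^+,\partial X^+)$, since $H^{2n-k}(X^+)$ is pure of weight $2n-k$ and so contributes nothing in weight $2n-k-1$. Dualizing gives the stated expression.

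The main obstacle I anticipate is bookkeeping with the weight filtration at the borderline weights $j=k-1$, $j=k$, and $j=k+1$, where one must identify $W_j$ with $\Gr_j^W$ on the appropriate cohomology groups using smoothness or properness of the relevant strata, and verify that the cokernels and kernels appearing in the theorem really coincide with the corresponding graded subquotients in the long exact sequences rather than with their full ungraded analogues.
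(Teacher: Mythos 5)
Your proposal is correct and follows essentially the same route as the paper: the cases $j\le k-1$ via the graded Mayer--Vietoris sequence of the resolution (Proposition~\ref{resolution weights}), the case $j=k$ via the same sequence using the vanishing of $\Gr_k^WH^{k-1}(E)$ together with the purity of $H^k(\widetilde X)$ and $H^k(V)$, and the cases $j\ge k+1$ via Proposition~\ref{proper weights}, Poincar\'e Duality for $\widetilde X$, and Proposition~\ref{compactification weights} applied to $(X^+,\partial X^+)$. The only slip is cosmetic: $\Gr_k^WH^{k-1}(E)=0$ simply because $E$ is proper, so $H^{k-1}(E)$ has weights at most $k-1$; no bound on $\dim E$ is needed (nor is one available in general).
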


\begin{proof}
The first two isomorphisms follow from Proposition~\ref{resolution weights}, and the last two follow from Propositions~\ref{compactification weights} and \ref{proper weights}.  The remaining isomorphism, when $j = k$, follows from the Mayer-Vietoris sequence
\[
\cdots \rightarrow H^{k-1}(E) \rightarrow H^k(X) \rightarrow H^k(\widetilde X) \oplus H^k(V) \rightarrow H^k(E) \rightarrow \cdots,
\] 
because $\Gr_k^W H^{k-1}(E)$ vanishes, $H^k(\widetilde X)$ has weights greater than or equal to $k$, and $H^k(V)$ has pure weight $k$.
\end{proof}

\noindent Note that if the resolution and compactification are chosen so that the exceptional locus $E$ and the boundary $\partial X^+$ are divisors with simple normal crossings then each of the homology and cohomology groups appearing in the corollary, is either a pure Hodge structure of the form  $H^k(X^+)$ or $W_k H^k(\widetilde X)$, or can be expressed combinatorially as subquotients of homology and cohomology groups of intersections of components of the boundary and exceptional divisors.

\begin{remark} \label{isolated duality}
Suppose $X$ is compact and its singular locus is contained in a smooth closed subset $V \subset X$.  Let $\pi: \widetilde X \rightarrow X$ be a log resolution with respect to $V$ and let $E = \pi^{-1}(V)$ be the exceptional divisor.  Then $W_j H^k(X)$ and $W_j H^k_c(X \smallsetminus V)$ are both isomorphic to $W_j H^{k-1}(E)$ for $j$ less than $k-1$, and it follows that the Poincar\'e pairing on $X \smallsetminus V$ induces a perfect pairing
\[
\Gr_j^WH^k(X) \times \Gr_{2n-j}^W H^{2n-k}(X \smallsetminus V) \rightarrow \Q
\]
This justifies in many special cases, including for varieties with isolated singularities, the rough idea that the low weight pieces of the cohomology of a singular variety are dual to the high weight pieces of the cohomology of a smooth open variety.  The Mayer-Vietoris argument in the proof of Proposition~\ref{resolution weights} also shows how the situation is more complicated when the discriminant is singular, since the long exact sequence for the low weight pieces of the cohomology of $X$ also involve the low weight pieces of the cohomology of the discriminant.
\end{remark}

We conclude by relating the weight filtration on the cohomology of a compact variety with an isolated singularity to the the cohomology of resolution complex.

\begin{theorem}
Let $X$ be a normal variety that is smooth away from an isolated singular point $x$, and let $\pi:\widetilde X \rightarrow X$ be a log resolution with exceptional divisor $E$ that is an isomorphism over $X \smallsetminus x$.  The reduced cohomology of the resolution complex $\Delta(E)$ is naturally isomorphic, with degree shifted by one, to the weight zero part of the reduced cohomology of $X$
\[
\widetilde H^{k-1}(\Delta(E); \Q) \cong W_0 \widetilde H^k(X)
\]
\end{theorem}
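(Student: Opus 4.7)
The plan is to chain Proposition~\ref{resolution weights} with the combinatorial identification $W_0 H^j(D) \cong H^j(\Delta(D);\Q)$ from Section~\ref{weight basics}, which applies to any complete divisor with simple normal crossings. Since $\pi$ is proper and the discriminant $V = \{x\}$ is a single point, $E = \pi^{-1}(x)$ is automatically compact, so it is a complete SNC divisor and the combinatorial identification applies to it.

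For $k \geq 2$, Proposition~\ref{resolution weights}, applied with smooth discriminant $V = \{x\}$, gives a natural isomorphism $W_0 H^k(X) \cong W_0 H^{k-1}(E)$. Composing with the natural identification $W_0 H^{k-1}(E) \cong H^{k-1}(\Delta(E);\Q) = \widetilde H^{k-1}(\Delta(E);\Q)$ (where the last equality uses $k-1 \geq 1$), and observing that $W_0 H^k(X) = W_0 \widetilde H^k(X)$ for $k \geq 1$, one obtains the theorem in this range.

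The case $k = 1$ lies just outside the range $j < k-1$ of Proposition~\ref{resolution weights}, so I would argue it directly from the low end of the Mayer-Vietoris sequence for $\pi$:
\[
H^0(\widetilde X) \oplus H^0(V) \longrightarrow H^0(E) \longrightarrow H^1(X) \longrightarrow H^1(\widetilde X) \oplus H^1(V).
\]
Since $\widetilde X$ is smooth, $W_0 H^1(\widetilde X) = 0$, and $H^1(V) = 0$ as $V$ is a point; all remaining entries already have pure weight zero. Both $\widetilde X$ and $V$ are connected, and each $1$ restricts to $\mathbf{1}_E$, so the image of the first map is the diagonal $\Q \cdot \mathbf{1}_E \subset H^0(E)$. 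The cokernel is therefore $\widetilde H^0(E;\Q) \cong \widetilde H^0(\Delta(E);\Q)$, which matches $W_0 \widetilde H^1(X) = W_0 H^1(X)$, as required. The remaining case $k = 0$ is trivial, both sides vanishing.

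Since every step invokes a result already in hand, there is no real obstacle; the only genuine care is in noticing that $k = 1$ falls outside the stated range of Proposition~\ref{resolution weights} and must be treated by direct inspection of the Mayer-Vietoris sequence, and in keeping track of the shift between reduced and unreduced cohomology at the endpoint.
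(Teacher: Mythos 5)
Your proof is correct and follows essentially the same route as the paper: Proposition~\ref{resolution weights} plus the identification $W_0H^{k-1}(E)\cong H^{k-1}(\Delta(E);\Q)$ for $k\ge 2$, and the low end of the Mayer--Vietoris sequence for $k=1$. The only cosmetic difference is that for $k=1$ the paper shows both sides vanish (using connectedness of $E$ via normality), whereas you read off $W_0H^1(X)$ directly as the cokernel $\widetilde H^0(E)$; both are valid and rest on the same exact sequence.
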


\begin{proof}
The proposition is clear when $k$ is zero, since both sides vanish.  The resolution complex $\Delta(E)$ is connected, since $X$ is normal, so when $k$ is one we must show that $W_0 H^1(X)$ vanishes.  From the Mayer-Vietoris sequence for the resolution, we have an exact sequence of mixed Hodge structures
\[
\cdots \rightarrow H^0(\widetilde X) \oplus H^0(\mathrm{pt}) \rightarrow H^0(\mathrm{E}) \rightarrow H^1(X) \rightarrow H^1(\widetilde X) \rightarrow \cdots.
\]
Since $X$ is normal, the exceptional divisor $E$ is connected, so the map to $H^0(E)$ is surjective.  It follows that $W_0 H^1(X)$ injects into $W_0 H^1(\widetilde X)$, which vanishes since $H^1(\widetilde X)$ has pure weight one.  Therefore $W_0 H^1(X)$ vanishes, too, as required.

When $k$ is at least two, we have $W_0 H^{k-1}(E) \cong W_0 H^{k}(X)$, by Proposition~\ref{resolution weights}.  Then $W_0 H^{k-1}(E)$ is isomorphic to $H^{k-1}(\Delta(E); \Q)$, since $E$ has simple normal crossings, and the theorem follows.
\end{proof}

\section{Examples of resolution complexes of singularities}  \label{resolution section}

As noted in the introduction, the resolution complex of an isolated normal Cohen-Macaulay singularity has the rational homology of a wedge of spheres, and the resolution complex of a rational singularity has the rational homology of a point.  Stepanov asked whether the resolution complex of an isolated rational singularity has the homotopy type of a point \cite{Stepanov05}.  Here we give a negative answer.

The following is an example of a rational threefold singularity whose resolution complex has the homotopy type of $\R\P^2$.  The singularity is a deformation of a cone over a degenerate Enriques surface, inspired by a suggestion of J. Koll\'ar. Note that the higher cohomology groups of the structure sheaf of an Enriques surface vanish, so the cone over an Enriques surface is a rational singularity, and the dual complex of a semistable, totally degenerate Enriques surface is a triangulation of $\R\P^2$ \cite{Morrison81}.

\begin{example} \label{Enriques}
Consider the affine cone over $\P^1 \times \P^1 \times \P^1$, embedded by $\mathcal{O}(2,2,2)$.  This is a four dimensional variety in $\C^{27}$ with an isolated singularity at the origin, and the coordinates on $\C^{27}$ are naturally labeled $x_{ijk}$, with $i$, $j$, and $k$ in $\{0,1,2\}$.  If one considers $\mathcal{O}(2,2,2)$ as a toric line bundle corresponding to twice the unit cube in $\R^3$, then $x_{ijk}$ is the torus isotypical section corresponding to the lattice point $(i,j,k)$.  In particular, $x_{111}$ is the unique lattice point in the interior of this cube and its vanishing locus is the toric boundary, the complement $D$ of the cone over $\C^* \times \C^* \times \C^*$.  A general section of $\mathcal{O}(2,2,2)$ cuts out a $K3$ surface.

Let $X$ be the threefold cut out in this cone by
\[
x_{111} + x_{000}^2 + x_{002}^2 + x_{020}^2 + x_{022}^2 + x_{200}^2 + x_{202}^2 + x_{220}^2 + x_{222}^2.
\]
To first order at the cone point, $X$ looks like the cone over the toric boundary of $\P^1 \times \P^1 \times \P^1$, which is six copies of $\P^1 \times \P^1$ glued together like the faces of a cube.  So the exceptional fiber in the blowup of $X$ at the origin is a copy of this degenerate $K3$ surface $D$, whose dual complex is the boundary of an octahedron.  In local coordinates, one can check (e.g. using Jacobian matrices and computer algebra software) that $X$ is smooth away from the cone point, and is resolved by this single blowup.

Now $X$ is invariant under the involution that takes $x_{ijk}$ to $x_{(2-i)(2-j)(2-k)}$ and does not contain any of the lines that are fixed by the involution, which are exactly the cones over the 2-torsion points in $\C^* \times \C^* \times \C^*$.  So the image $y$ of the origin is an isolated singularity of the quotient $Y$ of $X$ by this involution.  The involution lifts to the blowup of $X$ at the origin and acts freely on the exceptional fiber, inducing the antipodal map on $\Delta(D)$.  It follows that the quotient of this blowup by the involution is a resolution of $(Y,y)$, and the exceptional fiber consists of three copies of $\P^1 \times \P^1$, with an appropriate gluing such that the dual complex is a triangulation of $\R\P^2$.  A \v{C}ech computation shows that the higher cohomology groups of the structure sheaf of the exceptional fiber vanish, so $(Y,y)$ is a rational singularity.
\end{example}

We conclude with a computation of resolution complexes for normal hypersurface singularities that are nondegenerate with respect to Newton polyhedra.  Let $X$ be a hypersurface in $\C^{n+1}$ defined by an equation $f = a_1 x^{u_1} + \cdots + a_r x^{u_r}$, with $u_i$ distinct exponents in $\N^{n+1}$ and $a_i \in \C^*$.  The Newton polyhedron of $f$ is the Minkowski sum
\[
\Gamma = \conv\{u_1, \ldots, u_r \} + \R_{\geq 0}^{n+1},
\]
and the inner normal fan $\Sigma_\Gamma$ is a subdivision of the positive orthant in the dual real vector space.  The singularity $(X,0)$ is \emph{general with respect to its Newton polyhedron} if the restriction of $f$ to each face of $\Gamma$ cuts out a smooth hypersurface in $(\C^*)^{n+1}$.  The following theorem generalizes partial results of Stepanov from \cite[Section~5]{Stepanov06b}.

\begin{theorem}  \label{thm:hypersurface}
Let $(X,0)$ be a normal hypersurface singularity that is general with respect to its Newton polyhedron.  Then the resolution complex of $(X,0)$ is homotopy equivalent to a wedge sum of spheres of dimension $n-1$.
\end{theorem}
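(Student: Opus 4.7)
The plan is to construct an explicit toric log resolution of $(X,0)$ adapted to the Newton polyhedron $\Gamma$, identify the resulting resolution complex with a concrete combinatorial model on the inner normal fan, and then verify topologically that this model has the homotopy type of a wedge of $(n-1)$-spheres.

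First I would fix a smooth refinement $\Sigma'$ of the inner normal fan $\Sigma_\Gamma$, supported on $\R_{\ge 0}^{n+1}$, and consider the toric morphism $\pi:Y_{\Sigma'}\to\C^{n+1}$. The nondegeneracy of $f$ with respect to $\Gamma$ means that the restriction of $f$ to every torus orbit $O_\sigma$ of $Y_{\Sigma'}$ either vanishes nowhere or cuts out a smooth hypersurface, so by the usual Khovanskii-type transversality argument the strict transform $\widetilde X$ is smooth and meets each toric stratum transversally. Hence $\pi|_{\widetilde X}:\widetilde X\to X$ is a log resolution of $(X,0)$ whose exceptional divisor $E=\widetilde X\cap\pi^{-1}(0)$ has simple normal crossings, and by the corollary to Theorem~\ref{pairs} it suffices to analyze $\Delta(E)$ for this particular resolution.

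Next, for each cone $\sigma\in\Sigma'$ with $\sigma\smallsetminus\{0\}\subset\R_{>0}^{n+1}$, let $F_\sigma$ denote the face of $\Gamma$ on which a linear functional in the relative interior of $\sigma$ attains its minimum. Then $\widetilde X\cap V(\sigma)$ is empty when $F_\sigma$ is a single vertex (since the initial form $f_\sigma$ is then a nowhere vanishing monomial on $O_\sigma$) and otherwise a smooth subvariety of codimension $\dim\sigma$ in $\widetilde X$ whose number of irreducible components $d(\sigma)$ is a positive integer determined by the lattice geometry of $F_\sigma$. This exhibits $\Delta(E)$ as the $d(\sigma)$-fold puckering along each maximal cell of the sub-$\Delta$-complex $\Delta_\Gamma\subset\Sigma'$ consisting of cones $\sigma$ whose rays all lie in the open orthant and for which $\dim F_\sigma\ge 1$. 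By iterating Lemma~\ref{puckering}, the theorem reduces to showing that $\Delta_\Gamma$ itself has the homotopy type of a wedge of $(n-1)$-spheres.

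For the topological step, I would view $\Delta_\Gamma$ as a subcomplex of the triangulated closed $n$-disk $L=\lk(0,\Sigma')$: the coarser fan $\Sigma_\Gamma$ partitions $L$ into closed regions $L_v=L\cap\overline{C_v}$, one for each compact vertex $v$ of $\Gamma$, and $\Delta_\Gamma$ is the union of the interior walls $\partial L_v\cap L^\circ$. Let $V_{\mathrm{int}}$ be the set of compact vertices $v$ whose region $L_v$ is disjoint from $\partial L$; for such $v$ the boundary $\partial L_v$ is an $(n-1)$-sphere properly embedded in the open interior $L^\circ$, while every other wall meets $\partial L$ with a free end lying outside $\Delta_\Gamma$. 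Deformation retracting through these free ends should collapse $\Delta_\Gamma$ onto a connected gluing of the spheres $\{\partial L_v\}_{v\in V_{\mathrm{int}}}$ along contractible pieces, namely a wedge of $|V_{\mathrm{int}}|$ spheres of dimension $n-1$, with connectedness enforced by normality of $(X,0)$ via Zariski's connectedness theorem. The main obstacle I expect will be organizing this simultaneous deformation retraction and verifying that the result is a genuine wedge rather than a disjoint union of spheres; additional technical points that require care are the lattice-geometric identification of the multiplicities $d(\sigma)$ and the handling of cases where $\Sigma_\Gamma$ is not already simplicial and needs a genuine refinement.
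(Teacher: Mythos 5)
Your overall strategy coincides with the paper's: pass to a smooth toric modification determined by $\Sigma_\Gamma$, identify $\Delta(E)$ as a multiplicity-$\ell$ puckering (along the cells corresponding to compact edges of $\Gamma$) of the subcomplex of interior cells lying over nonmaximal cones of $\Sigma_\Gamma$, invoke the homotopy invariance of the resolution complex to justify working with this particular weak log resolution, and reduce via Lemma~\ref{puckering}. Your combinatorial model $\Delta_\Gamma$ agrees with the paper's $\mathcal{P}\mathcal{S}'_0$, and your count of spheres (one per compact vertex of $\Gamma$ whose normal cone is interior, plus the puckering contributions) matches the paper's closing remark. However, there are two genuine gaps.

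The main gap is the topological step, which you yourself flag as ``the main obstacle.'' Your proposed mechanism --- deformation retracting $\Delta_\Gamma$ through its ``free ends'' onto the union of the spheres $\partial L_v$ for the fully interior regions --- does not work as stated. If no region is disjoint from $\partial L$, the proposed target is empty while $\Delta_\Gamma$ can be a nonempty contractible complex (e.g.\ a single interior vertex), so no such retraction exists; and even when interior regions are present, a union of $(n-1)$-spheres glued along shared walls is not a wedge until one proves simple connectivity and freeness of homology. For $n\geq 3$ this last point is essential and cannot be read off from a homology count alone --- the $\R\P^2$ examples elsewhere in the paper show exactly why. The paper supplies the missing argument as Lemma~\ref{interior complex}: one shows the interior subcomplex is homotopy equivalent to the complement of a collar of $\partial P$ inside the $(n-1)$-skeleton of the subdivision, uses Lemma~\ref{n skeleton} to identify that skeleton as a wedge of spheres, and then runs Van~Kampen and Mayer--Vietoris on the cover by the collar and the interior part to get simple connectivity and free homology concentrated in degree $n-1$, finishing with Whitehead--Hurewicz. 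Some such argument must replace your retraction.

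A second, smaller gap is divisoriality of the exceptional locus. An arbitrary smooth refinement $\Sigma'$ of $\Sigma_\Gamma$ does not make $\widetilde X\cap\pi^{-1}(0)$ a divisor: a cone of $\Sigma'$ of dimension at least two whose relative interior lies in the open orthant but which has no ray in the open orthant contributes a component of $\pi^{-1}(0)\cap\widetilde X$ of codimension at least two, so $\pi|_{\widetilde X}$ is not a weak log resolution with respect to $0$ and $\Delta(E)$ is not the dual complex you describe. The paper avoids this by refining the \emph{barycentric subdivision} of $\Sigma_\Gamma$, which forces every cone over the origin's preimage to contain an interior ray. You should impose an analogous condition on $\Sigma'$ before identifying $\Delta(E)$ with $\Delta_\Gamma$.
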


\noindent  In particular, if $W_0H^n(X)$ vanishes then the resolution complex of $(X,0)$ is contractible.

In the proof of Theorem~\ref{thm:hypersurface}, we will use the following lemma on subcomplexes of subdivisions of polytopes.

\begin{lemma} \label{interior complex}
Let $\mathcal S$ be a subdivision of a polytope $P$ of dimension $n$, and let $\mathcal S_0$ be the union of the nonmaximal faces of $\mathcal S$ that are contained in the interior of $P$.  If $\mathcal S_0$ is connected then it is homotopy equivalent to a wedge sum of spheres of dimension $n-1$. 
\end{lemma}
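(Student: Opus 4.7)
Define $K \subseteq \mathcal{S}$ to be the subcomplex consisting of those faces of $\mathcal{S}$ that are contained in $\interior(P)$; by the convexity of $P$, this equals the subcomplex spanned by the interior vertices of $\mathcal{S}$. Since the nonmaximal faces of $K$ are precisely those of dimension less than $n$, we have $\mathcal{S}_0 = K^{(n-1)}$, the $(n-1)$-skeleton of $K$.

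My plan is to first show that $K$ itself is contractible, and then to apply Lemma~\ref{n skeleton} to the contractible CW complex $K$: its $(n-1)$-skeleton $K^{(n-1)} = \mathcal{S}_0$ is then automatically homotopy equivalent to a wedge sum of spheres of dimension $n-1$. The hypothesis that $\mathcal{S}_0$ is connected forces $K$ to be nonempty with connected $1$-skeleton, so this conclusion is meaningful.

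To prove contractibility of $K$, I would exhibit $|K|$ as a deformation retract of the $n$-ball $P$. The complement $P \smallsetminus |K|$ is the union of the relative interiors of those faces of $\mathcal{S}$ that meet $\partial P$, and so forms an open regular neighborhood of $\partial P$ in $P$. By a combinatorial construction---either an explicit cell-by-cell retraction, pushing each mixed face (having both interior and boundary vertices) linearly toward its subface of boundary vertices along the boundary barycentric coordinates, or equivalently an acyclic matching on the face poset of $\mathcal{S}$ whose critical cells are exactly the faces of $K$---this neighborhood deformation retracts onto $\partial P$. Dually, $P$ deformation retracts onto $|K|$, and since $P$ is a contractible ball, $|K|$ is contractible.

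I anticipate that the main technical obstacle will be the uniform treatment of ``chord'' faces of $\mathcal{S}$, namely faces whose vertices all lie on $\partial P$ but whose relative interiors lie in $\interior(P)$: unlike the mixed faces, these admit no natural interior vertex toward which to retract, and the retraction must be propagated onto them from the adjacent higher-dimensional cells. Once these cases are handled by the regular-neighborhood theorem for CW pairs (or by a careful acyclic matching in the face poset), the contractibility of $K$ follows, and the lemma is immediate from Lemma~\ref{n skeleton}.
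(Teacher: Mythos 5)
Your reduction of the lemma to the single claim that the induced subcomplex $K$ on the interior vertices is contractible is a genuinely different route from the paper's: the paper never isolates $K$, but instead runs Mayer--Vietoris and Van Kampen on the full $(n-1)$-skeleton $\mathcal S^{(n-1)}$, decomposed as a neighborhood of $\partial P$ union the part disjoint from $\partial P$, and extracts the homology of $\mathcal S_0$ from the known homology of $\mathcal S^{(n-1)}$ (a wedge of spheres with basis the boundaries of the maximal cells). Unfortunately, in your route the contractibility of $K$ carries the entire content of the lemma, and your argument for it has two independent failures. First, the inference ``$P\smallsetminus|K|$ deformation retracts onto $\partial P$, so dually $P$ deformation retracts onto $|K|$'' is not a valid implication: the homotopy type of the complement of a compact polyhedron in a ball constrains only the (co)homology of that polyhedron, via Alexander duality, and says nothing about its fundamental group, so even granting the first clause you would still owe a proof that $K$ is simply connected. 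Second, the first clause is itself unestablished: the standard full-subcomplex retraction pushes $P\smallsetminus|K|$ onto the full subcomplex spanned by the \emph{boundary} vertices, which contains all the chord faces, and you give no reason why that complex should be homotopy equivalent to $\partial P$. The chord faces are not a technicality absorbed by a regular-neighborhood theorem; they are exactly where the approach breaks.

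Indeed, $K$ need not be contractible, so no argument can close this gap at the stated level of generality. Let $P$ be the hexagonal bipyramid $\conv(u_1,\dots,u_6,w^+,w^-)$ with $u_k=10(\cos(k\pi/3),\sin(k\pi/3),0)$ and $w^\pm=(0,0,\pm1)$, and set $v_k=u_k/10$, which lies in $\interior(P)$. Triangulate $P$ by the six tetrahedra $v_kv_{k+1}w^+w^-$ (these fill the small bipyramid $\conv(v_1,\dots,v_6,w^+,w^-)$, coned from the axis $w^+w^-$) together with, for each $k$, the four tetrahedra
\[
v_{k+1}u_kv_kw^+,\quad v_{k+1}u_kw^+u_{k+1},\quad v_{k+1}u_ku_{k+1}w^-,\quad v_{k+1}u_kw^-v_k,
\]
which triangulate $\conv(u_k,u_{k+1},w^+,w^-)\smallsetminus\interior\big(\conv(v_k,v_{k+1},w^+,w^-)\big)$ (a volume count and a check of the common walls verify that these thirty tetrahedra form a simplicial subdivision of $P$). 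The only interior vertices are $v_1,\dots,v_6$, and every simplex of the subdivision contains at most two of them, necessarily consecutive. Hence $K=\mathcal S_0$ is the hexagonal circuit $v_1v_2\cdots v_6v_1$: it is connected, but it is homotopy equivalent to $S^1$, not contractible and not a wedge of $2$-spheres. Note that the one chord face here is the axis edge $w^+w^-$, whose maximal subface contained in $\interior(P)$ is empty, so this configuration is also precisely the one that the paper's own face-by-face retraction of $\mathcal S_0'$ onto $\mathcal S_0$ does not handle.

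The upshot is that your proof cannot be repaired without adding hypotheses: the statement needs some restriction (e.g.\ to the coherent subdivisions induced by normal fans of Newton polyhedra, which is all that Theorem~\ref{thm:hypersurface} uses, or a hypothesis ruling out chord faces), and under such a restriction the real work is to show that chord faces cannot create the linking phenomenon above. Your instinct that the chord faces are the crux is exactly right; the error is in treating them as a removable technicality rather than as the obstruction.
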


\begin{proof}
The lemma is trivial if $n$ is less than three, so assume $n$ is at least three.

Let $\mathcal S^{(n-1)}$ be the union of the nonmaximal cones of $\mathcal S$.  Then $\mathcal S^{(n-1)}$ is the $n-1$ skeleton of the contractible complex $\mathcal S$, and hence has the homotopy type of a wedge sum of $n-1$ dimensional spheres.  Furthermore, if we label the maximal faces of $\mathcal S$ as $F_1, \ldots, F_s$, it follows from the Mayer-Vietoris sequence for the union of $\mathcal S^{(n-1)} \cup F_1 \cup \cdots \cup F_i$ with $F_{i+1}$ that the boundaries of $F_1, \ldots, F_r$ give a basis for the integral homology group $H_{n-1}(\mathcal S^{(n-1)}, \Z)$.

Note that $\mathcal S_0' = \mathcal S^{(n-1)} \smallsetminus (\mathcal S^{(n-1)} \cap \partial P)$ deformation retracts onto $\mathcal S_0$.  To see this, choose a triangulation of $\mathcal S$ with no additional vertices, for example by ordering the vertices of $\mathcal S$ and performing a pulling triangulation.  If $F$ is a face of this triangulation, and $F_0$ is the maximal subface of $F$ contained in the interior of $P$, then $F \smallsetminus (F \cap \partial P)$ canonically deformation retracts onto $F_0$.  Performing these deformation retracts simultaneously on the faces of the induced triangulation of $\mathcal S^{(n-1)}$ proves the claim.

Let $D$ be a small neighborhood of $\partial P$ in $\mathcal S^{(n-1)}$.  Then $\mathcal S^{(n-1)}$ is the union of $D$ and $\mathcal S_0'$.  After renumbering, say the maximal faces $F_1, \ldots, F_r$ meet the boundary of $P$, and $F_{r+1}, \ldots, F_s$ do not.  An argument similar to the preceding paragraph shows that $D \cap \mathcal S_0'$ is homotopy equivalent to a wedge sum of $r-1$ spheres of dimension $n-2$, and its intersection with the boundaries of $F_1, \ldots, F_{r-1}$ give a basis for the integral homology group $H_{n-2}(D \cap \mathcal S_0', \Z)$.

Applying the Van Kampen and Mayer-Vietoris theorems to the open cover
\[
\mathcal S^{(n-1)} = D \cup \mathcal S_0',
\]
shows that the fundamental group of $\mathcal S_0'$ is trivial and the integral reduced homology of $\mathcal S_0'$ is free and concentrated in degree $n-1$.  Therefore, by the Whitehead and Hurewicz theorems, $\mathcal S_0'$ has the homotopy type of a wedge sum of $n-1$ spheres, and the lemma follows, since $\mathcal S_0'$ deformation retracts onto $\mathcal S_0$.
\end{proof}

\begin{proof}[Proof of Theorem~\ref{thm:hypersurface}]
The inner normal fan $\Sigma_\Gamma$ of the Newton polytope of $(X,0)$ is a subdivision of the positive orthant in $\R^n$ and hence corresponds to a proper birational toric morphism $X(\Sigma_\Gamma) \rightarrow \C^{n+1}$.  Because $(X,0)$ is general with respect to $\Gamma$, the strict transform $\widetilde X$ meets every torus invariant subvariety $V(\sigma)$ properly and, furthermore, $\widetilde X \cap V(\sigma)$ is connected if it is positive dimensional, and consists of $\ell$ points if $\sigma$ is the codimension one cone corresponding to a compact edge of lattice length $\ell$ in the Newton polyhedron \cite{Varchenko76}.  If $\Sigma_\Gamma$ is unimodular then this morphism gives an embedded resolution of $X$, and if every component of $\pi^{-1}(0)$ is a divisor then this is a weak log resolution.  We consider first the resolution complex in this special case.

Let $\mathcal{S}$ be the subdivision of the standard simplex $\conv \{e_0^*, \ldots, e_n^*\}$ induced by $\Sigma_\Gamma$, and let $\mathcal{S}_0$ be the union of the nonmaximal faces of $\mathcal{S}$ that are contained in the interior of the simplex.  The discussion above says that the resolution complex $\Delta(E)$ is obtained from $\mathcal{S}_0$ by taking the $\ell$-fold puckering along each $n-1$ dimensional face corresponding to a compact edge of lattice length $\ell$ in $\Gamma$.  Since $(X,0)$ is assumed to be normal, the resolution complex must be connected, and hence $\mathcal{S}_0$ is connected as well.  By Lemma \ref{interior complex}, $\mathcal{S}_0$ is homotopy equivalent to a wedge sum of spheres of dimension $n-1$ and, by Lemma~\ref{puckering}, the puckering $\Delta(E)$ is so, too.

We now extend the arguments above to the general case, where $\widetilde X \rightarrow X$ is not necessarily a log resolution with respect to zero.  Let $\Sigma'$ be a unimodular refinement of the barycentric subdivision of $\Sigma_\Gamma$.  Then $\pi': X(\Sigma') \rightarrow \C^{n+1}$ is an embedded resolution of $X$ that factors through $X(\Sigma_\Gamma)$.  Since $(X,0)$ is general with respect to $\Gamma$, the strict transform $X'$ of $X$ meets all of the boundary components of the smooth toric variety $X(\Sigma')$ in the preimage of 0 transversely, and the barycentric subdivision ensures that every component of the preimage $E'$ of 0 is a divisor.  Since the toric boundary of $X'$ is a divisor with simple normal crossings, it follows that $E'$ is a divisor with simple normal crossings as well.  In particular, the restriction of $\pi'$ to the strict transform of $X$ is a weak log resolution of $X$ with respect to 0.

Let $\mathcal{S}'$ be the union of the faces in the subdivision of the unit simplex induced by $\Sigma'$ that are contained in nonmaximal faces of $\mathcal S$, and let $\mathcal P \mathcal S'$ be constructed by taking the $\ell$-fold puckering of each $(n-1)$-dimensional face corresponding to a compact edge of length $\ell$ in the Newton polyhedron.  As in the proof of Lemma~\ref{interior complex}, the subcomplex $\mathcal P \mathcal S'_0$ consisting of faces entirely contained in the interior of the simplex is homotopy equivalent to $\mathcal \P \mathcal S'$ minus the boundary of the simplex,  the full complex $\mathcal P \mathcal S'$ is homotopy equivalent to a wedge sum of spheres, and applications of the Van Kampen, Mayer-Vietoris, and Whitehead and Hurewicz Theorems show that $\mathcal S_0'$ is homotopy equivalent to a wedge sum of spheres.

Now $E'$ has one vertex for each ray of $\Sigma'$ in the interior of the positive orthant that is contained in a face of codimension at least two in $\Sigma_\Gamma$, and $\ell$ vertices for each ray that is contained in a codimension one cone of $\Sigma_\Gamma$ corresponding to a compact of length $\ell$ in the Newton polyhedron.  By considering how the resolution factors through $X(\Sigma_\Gamma)$, one sees that the resolution complex $\Delta(E')$ is naturally identified with $\mathcal P \mathcal S'_0$, and the theorem follows.
\end{proof}

\begin{remark}
The embedded resolutions of isolated hypersurface singularities induced by subdivisions of the positive orthant are not in general isomorphisms away from 0.  Therefore, it is essential in the proof Theorem~\ref{thm:hypersurface} that the resolution complex does not depend on the discriminant of the resolution.
\end{remark}

\begin{remark}
The proof of Theorem~\ref{thm:hypersurface} gives a combinatorial formula for the number of spheres in the resolution complex of $(X,0)$, which is also the dimension of $W_0H^n(X)$.  It is exactly the number of vertices of the Newton polyhedron that are contained in no unbounded facets plus the sum over all compact edges of the Newton polyhedron of the lattice length minus one.
\end{remark}

\begin{remark}
Similar methods can be used to compute resolution complexes for some normal complete intersection singularities that are general with respect to Newton polyhedra.  Suppose $\{X_1, \ldots, X_r \}$ is a collection of hypersurfaces that is generic with respect to Newton polyhedra $\Gamma_1, \ldots, \Gamma_r$, and let $X$ be the complete intersection $X_1 \cap \cdots \cap X_r$.  If the singularity $(X,0)$ is normal and the Newton polyhedra all have the same normal fan, then an argument similar to the one above for hypersurfaces shows that the resolution complex of $(X,0)$ has the homotopy type of a wedge sum of spheres of dimension $\dim X - 1$.
\end{remark}

\begin{corollary}
The resolution complex of an isolated rational singularity that is general with respect to its Newton polyhedron is contractible.
\end{corollary}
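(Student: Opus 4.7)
The plan is to combine Theorem~\ref{thm:hypersurface} with the standard vanishing of weight-zero cohomology for rational singularities. First I would apply Theorem~\ref{thm:hypersurface}: since a rational singularity is automatically normal, the hypothesis of that theorem is met, and the resolution complex $\Delta(E)$ is homotopy equivalent to a wedge sum of some number $m$ of spheres of dimension $n-1$, where $n = \dim X$. Such a wedge is contractible precisely when $m = 0$, so the corollary reduces to showing that $\widetilde H_{n-1}(\Delta(E); \Q) = 0$.

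Next I would appeal to the identification $\widetilde H^{n-1}(\Delta(E); \Q) \cong W_0 \widetilde H^n(X)$ established in Section~\ref{singularities section}, based on the combinatorial description of $W_0 H^*$ of a simple normal crossing divisor given in Section~\ref{weight basics}. Under this isomorphism the desired vanishing translates into the statement $W_0 H^n(X) = 0$. I would then invoke the fact, already noted in the introduction and going back to results of Ishii and Kov\'acs on Du Bois and rational singularities, that this weight-zero piece vanishes for any isolated rational singularity. Concretely, this follows from the Mayer--Vietoris sequence relating $H^*(X)$, $H^*(\widetilde X)$, and $H^*(E)$: the map $H^{n-1}(\widetilde X) \oplus H^{n-1}(\mathrm{pt}) \to H^{n-1}(E)$ is weight-zero surjective precisely by the rationality hypothesis, forcing $W_0 H^n(X)$ to vanish.

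Putting the two ingredients together gives $m = 0$, so $\Delta(E)$ is homotopy equivalent to a point and hence contractible. There is essentially no obstacle here: the remark immediately following Theorem~\ref{thm:hypersurface} already records the conditional statement ``if $W_0 H^n(X)$ vanishes then the resolution complex of $(X,0)$ is contractible,'' so the corollary is precisely that remark combined with the classical weight-zero vanishing for rational singularities. The only place one must be mildly careful is in citing the correct source for the vanishing $W_0 H^n(X) = 0$, but this is routine in view of the discussion of Du Bois singularities in the introduction.
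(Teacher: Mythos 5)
Your proposal is correct and is exactly the argument the paper intends: Theorem~\ref{thm:hypersurface} gives a wedge of $(n-1)$-spheres, and the weight-zero vanishing $W_0H^n(X)=0$ for rational singularities (via the identification $\widetilde H^{n-1}(\Delta(E);\Q)\cong W_0\widetilde H^n(X)$ and the cited Du Bois--type results of Ishii and Kov\'acs) forces the number of spheres to be zero. One small correction to your last step: for $n\geq 2$ the groups $H^{n-1}(\widetilde X)$ and $H^{n-1}(\mathrm{pt})$ have no weight-zero part at all, so rationality does not make that map ``weight-zero surjective''; rather, rationality kills $W_0H^{n-1}(E)$ itself (the weight-zero classes are of type $(0,0)$ and inject into $H^{n-1}(E,\OO_E)$, which vanishes), and then Proposition~\ref{resolution weights} gives $W_0H^n(X)\cong W_0H^{n-1}(E)=0$.
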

 
\bibliography{math}
\bibliographystyle{amsalpha}

\end{document}